\documentclass[11pt]{article}
\usepackage[a4paper,margin=1in]{geometry}
\usepackage[T1]{fontenc}
\usepackage{lmodern}
\usepackage{authblk}
\usepackage{amsmath,amssymb,amsthm,mathtools,mathrsfs}
\usepackage{enumitem,microtype,booktabs}
\usepackage{xcolor}
\usepackage{float,needspace}
\usepackage{tikz}
\usepackage{graphicx}
\usetikzlibrary{arrows.meta}
\usepackage[colorlinks=true,linkcolor=blue,citecolor=blue,urlcolor=blue]{hyperref}
\hypersetup{pdftitle={Bi-parameter local linearization for the stochastic wave equation with rough noise}}
\numberwithin{equation}{section}
\newtheorem{theorem}{Theorem}[section]
\newtheorem{proposition}[theorem]{Proposition}
\newtheorem{lemma}[theorem]{Lemma}

\theoremstyle{definition}
\newtheorem{condition}[theorem]{Condition}
\theoremstyle{remark}
\newtheorem{remark}[theorem]{Remark}
\newcommand{\R}{\mathbb R}
\newcommand{\E}{\mathbb E}
\newcommand{\F}{\mathcal F}
\newcommand{\Hh}{\mathcal H}
\newcommand{\1}{\mathbf 1}

\newcommand{\dd}{\mathrm d}

\newcommand{\norm}[1]{\left\lVert#1\right\rVert}

\newcommand{\Nn}{\mathcal N}
\newcommand{\energy}[1]{\mathscr E_H(#1)}
\newcommand{\energyp}[2]{\mathscr E_{H,#1}(#2)}
\allowdisplaybreaks[1]
\setlist[enumerate]{itemsep=3pt,topsep=5pt}
\title{{Bi-parameter local linearization  for the stochastic wave equation with rough noise}}
 \author[1]{Ruinan Li}
\author[2]{Ran Wang}

\affil[1]{School of Statistics and Data Science,
Shanghai University of International Business and Economics,
Shanghai 201620, China}

\affil[2]{School of Mathematics and Statistics,
Wuhan University,
Wuhan 430072, China}

\affil[ ]{\texttt{ruinanli@amss.ac.cn} \qquad
\texttt{rwang@whu.edu.cn}}

\date{}

\begin{document}
\maketitle

\begin{abstract}  
We study a one-dimensional nonlinear stochastic wave equation driven
by Gaussian noise that is white in time and rough in space.
We prove a bi-parameter local linearization for mixed increments along
the two characteristic directions. The proof combines characteristic
cancellation with a localized fractional-energy estimate that controls
boundary interactions caused by the rough spatial noise.
As an application, we establish quadratic-variation limits on arbitrary
anisotropic rectangular meshes and construct a consistent estimator
of a multiplicative diffusion parameter. Numerical experiments
illustrate the finite-sample performance of the estimator.
\end{abstract}
 
\noindent\textbf{Keywords.} {Stochastic wave equation; spatially rough noise; local linearization;  quadratic variation.}

\noindent\textbf{MSC 2020.} 60H15; 60G17; 60G22.

\section{Introduction and main results}\label{sec:introduction}

 We consider the one-dimensional nonlinear stochastic wave equation 
\begin{equation}\label{eq:SWE}
\begin{cases}
\partial_t^2u(t,x)=\partial_x^2u(t,x)+F(u(t,x))\dot W(t,x),
&t>0,\ x\in\R,\\
u(0,x)=u_0(x),\qquad \partial_tu(0,x)=v_0(x).
\end{cases}
\end{equation}
Here $\dot W=\partial_t\partial_xW$ in the distributional
sense, with $W$ being a centered Gaussian field with covariance 
\begin{equation}\label{eq:covW}
\E[W(t,x)W(s,y)]
=\frac{t\wedge s}{2}\big(|x|^{2H}+|y|^{2H}-|x-y|^{2H}\big),
\qquad \frac14<H<\frac12.
\end{equation}

The stochastic-integration and solution theory for spatially homogeneous wave and heat equations has foundations in~\cite{Dalang,PeszatZabczyk, Walsh}. For fractional spatial noise with $H\in(1/4,1/2)$, Balan et al. \cite{BalanJolisQuer, BalanJolisQuer16} treated heat and wave equations with affine multiplicative coefficients, while Hu et al.  \cite{HuHuangLeNualartTindel} developed the nonlinear heat equation theory.    Liu et al.~\cite{LHW} established existence and uniqueness of the mild solution to the nonlinear stochastic wave equation under general conditions.

 For space-time white noise, corresponding to $H=\frac{1}{2}$,
Khoshnevisan et al.~\cite{KSXZ2014} established a temporal local
linearization of the form
$$
X(t+h,x)-X(t,x)
\approx
F(X(t,x))\bigl[Y(t+h,x)-Y(t,x)\bigr],
\qquad h\downarrow0,
$$
where $Y$ denotes the solution to the additive SHE
(that is, $F\equiv1$) driven by the same noise. Hairer and Pardoux~\cite{HP15} obtained a sharper local description
under stronger smoothness assumptions, using a different approach.  
For spatial increments, Foondun et al.~\cite{FoondunKhoshnevisanMahboubi} established
the analogous local spatial linearization
$$
u(t,x+h)-u(t,x)
\approx
\sigma(u(t,x))\bigl[U(t,x+h)-U(t,x)\bigr],
\qquad h\downarrow0.
$$
 These approximations separate the local effect of the nonlinearity,
represented by the coefficient \(F(X(t,x))\), from the Gaussian
fluctuations, described by the increments of \(Y\).  Further developments and applications of this
approach can be found in
\cite{Das2022,HuangKhoshnevisanParabolic,KKM23, KhoshnevisanLeePuXiao2025,Wang2024,WangXiao2024}.

For the wave equation, finite-speed propagation creates a different localization problem.  Huang and Khoshnevisan~\cite{HuangKhoshnevisan} made this contrast with parabolic SPDEs precise for a nonlinear wave equation with space-time white noise: their quadratic-variation and fluctuation limits retain the diffusion coefficient over extended characteristic regions. Thus ordinary one-parameter increments do not, in general, admit the usual parabolic approximation obtained by freezing the coefficient only at the observation point.

 In the space-time white-noise case $H=1/2$, Huang, Oh and
Okamoto~\cite{HuangOhOkamoto} resolved this issue by considering
mixed second-order increments along the two null directions.
The four cone contributions cancel outside a small characteristic
region, localizing the stochastic integral near the base point
and allowing the diffusion coefficient to be frozen there.
Rang and Wang~\cite{RangWang} obtained a related local
linearization result for the nonlinear damped stochastic
Klein--Gordon equation.

Liu and Wang~\cite{LiuWang} extended the characteristic
mixed-increment approach to noise that is white in time and
spatially homogeneous, with fractional Riesz-type covariance
and Hurst parameter $H\in(1/2,1)$. They used the local linearization to study quadratic variations and construct
a consistent estimator of the diffusion parameter.

The present paper establishes bi-parameter local linearization
in the rough regime $H\in(1/4,1/2)$.
The key analytic input is a localized fractional-energy estimate
that controls both the variation inside each spatial section
and the interactions created by zero extension across its
boundary. Combined with characteristic cancellation, this
estimate yields a remainder bound uniform over all aspect ratios.

 We first specify the assumptions on the coefficient and the
initial data. Let
\begin{equation}\label{eq:I0}
I_0(t,x)
=
\frac12\big[u_0(x+t)+u_0(x-t)\big]
+\frac12\int_{x-t}^{x+t}v_0(y)\dd y
\end{equation}
denote the deterministic wave evolution associated with the
initial data in~\eqref{eq:SWE}.

Writing $$\mathfrak D_h f(t,x)=f(t,x+h)-f(t,x),$$ we use the
space $\mathcal Z^q(T)$ introduced in~\cite[(2.6)]{LHW}.
For $T>0$ and $2\leq q<\infty$, this space consists of
continuous maps $f:[0,T]\times\R\to L^q(\Omega)$ with finite norm
\begin{equation}\label{eq:Zp}
\begin{aligned}
\norm{f}_{\mathcal Z^q(T)}
={}&
\sup_{t\in[0,T]}
\norm{f(t,\cdot)}_{L^q(\Omega\times\R)}\\
&+
\sup_{t\in[0,T]}
\left(
\int_\R
\norm{\mathfrak D_h f(t,\cdot)}_{L^q(\Omega\times\R)}^2
|h|^{2H-2}\dd h
\right)^{1/2}.
\end{aligned}
\end{equation}

\begin{condition}\label{cond:main}
Throughout the paper, we impose the following assumptions.
\begin{enumerate}[label=\textnormal{(A\arabic*)}]
\item
$F\in C^1(\R)$, $F(0)=0$, and $F'$ is bounded and globally
Lipschitz.

\item
There exists $p_0>2/(4H-1)$ such that
$I_0\in\mathcal Z^{p_0}(T)$ for every $T>0$.
Moreover, for every $T>0$ there exists $K_0(T)<\infty$ such that
$$
\sup_{(t,x)\in[0,T]\times\R}|I_0(t,x)|\leq K_0(T)
$$
and
$$
|I_0(t,x)-I_0(s,y)|
\leq
K_0(T)\big(|t-s|^H+|x-y|^H\big)
$$
for all $s,t\in[0,T]$ and $x,y\in\R$.
\end{enumerate}
\end{condition}

Under Condition~\ref{cond:main}, let $u$ denote the mild
solution of~\eqref{eq:SWE}, and let $U$ denote the corresponding
additive-noise solution with unit diffusion coefficient and
zero initial data, driven by the same noise $W$.
In null coordinates, set
$$
\begin{aligned}
v(\tau,\lambda)
&=u\left(\frac{\tau+\lambda}{\sqrt2},
         \frac{\lambda-\tau}{\sqrt2}\right),\\
V(\tau,\lambda)
&=U\left(\frac{\tau+\lambda}{\sqrt2},
         \frac{\lambda-\tau}{\sqrt2}\right).
\end{aligned}
$$
For $h_1,h_2>0$, define the mixed increment
\begin{equation}\label{eq:mixed-definition}
\begin{aligned}
D_{h_1,h_2}f(\tau,\lambda)
={}&f(\tau+h_1,\lambda+h_2)-f(\tau+h_1,\lambda)\\
&-f(\tau,\lambda+h_2)+f(\tau,\lambda).
\end{aligned}
\end{equation}
Write $$z=(\tau,\lambda),\ t_z=(\tau+\lambda)/\sqrt2,\
m=h_1\wedge h_2, \ M=h_1\vee h_2.$$
The increment is admissible on $[0,T]$ if
$0\leq t_z$ and $t_z+(h_1+h_2)/\sqrt2\leq T$.

Define
$$
\Psi_H(h_1,h_2)
=
2^{H-5/2}\left[(M-m)m^{2H}
+\frac{2m^{2H+1}}{2H+1}\right]
\asymp m^{2H}M.
$$
Proposition~\ref{prop:variance} identifies $\Psi_H(h_1,h_2)$
as the variance of $D_{h_1,h_2}V(z)$.

\begin{theorem}[Bi-parameter local linearization]\label{thm:main}
{Assume Condition~\ref{cond:main}. For every $T>0$ and finite $p\geq2$, there exists $C=C(T,p,H,F,I_0)$ such that every admissible positive increment with $M\leq1$ satisfies}
\begin{equation}\label{eq:main}
{\displaystyle
\left\|D_{h_1,h_2}v(z)-F(v(z))D_{h_1,h_2}V(z)\right\|_{L^p(\Omega)}
\leq C M^H\Psi_H(h_1,h_2)^{1/2}.
}
\end{equation}
The constant is uniform in $z,h_1$, and $h_2$; in particular,
it is independent of the aspect ratio $h_1/h_2$.
\end{theorem}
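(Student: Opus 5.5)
We start from the mild formulation. In null coordinates the wave kernel $\frac12\1_{\{|x-y|<t-s\}}$ is, up to a constant, the indicator of the backward light cone $\Lambda(z)$ of $z$. The mixed increment of the indicators,
\[
\1_{\Lambda(\tau+h_1,\lambda+h_2)}-\1_{\Lambda(\tau+h_1,\lambda)}-\1_{\Lambda(\tau,\lambda+h_2)}+\1_{\Lambda(\tau,\lambda)},
\]
is then (up to the normalizing constant) the indicator of the small characteristic rectangle $R_z=[\tau,\tau+h_1]\times[\lambda,\lambda+h_2]$ in null coordinates. Two consequences follow. First, the mixed increment annihilates the deterministic part, $D_{h_1,h_2}I_0=0$, because $I_0$ is a sum of a function of $\tau$ and a function of $\lambda$. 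Second, we obtain the exact representations
\[
D_{h_1,h_2}v(z)=\int \1_{R_z}(s,y)\,F(u(s,y))\,W(\dd s,\dd y),\qquad D_{h_1,h_2}V(z)=\int \1_{R_z}(s,y)\,W(\dd s,\dd y).
\]
The remainder is therefore
\[
\mathcal R=\int \1_{R_z}(s,y)\,\big[F(u(s,y))-F(v(z))\big]\,W(\dd s,\dd y).
\]
There is a subtlety here. $F(v(z))$ is $\mathcal F_{t_z}$-measurable, while $R_z$ lies entirely in times $\ge t_z$. So $F(v(z))$ can be pulled inside the Walsh-type integral, and the integrand $G(s,y)=F(u(s,y))-F(v(z))$ is adapted.

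For rough noise, BDG gives
\[
\|\mathcal R\|_p^2\lesssim \int_{t_z}^{t_z+(h_1+h_2)/\sqrt2}\int_\R \big\|\mathfrak D_h\big(\1_{R_z}(s,\cdot)G(s,\cdot)\big)\big\|_{L^p(\Omega\times\R)}^2\,|h|^{2H-2}\,\dd h\,\dd s.
\]
The spatial section $S_s$ of $R_z$ at time $s$ is an interval of length at most $\sqrt2\, m$ (it is bounded by the smaller side). We split $\mathfrak D_h(\1_{S_s}G)$ into two pieces:
\begin{enumerate}[label=(\roman*)]
\item an interior part $\1_{S_s}(y)\1_{S_s}(y+h)\,[G(s,y+h)-G(s,y)]$;
\item boundary parts where exactly one of $y,y+h$ lies in $S_s$. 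These are bounded by $|G(s,y)|\1_{S_s}(y)\1_{S_s^c}(y+h)$ and its mirror.
\end{enumerate}
This splitting is the localized fractional-energy estimate from the abstract, and it is the step I expect to be the main obstacle.

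For part (i) we use $|F(a)-F(b)|\le\|F'\|_\infty|a-b|$ and the $H$-Hölder regularity in $L^p$ of $u$. That regularity comes from Condition~\ref{cond:main}(A2) together with the moment and increment bounds for the mild solution from \cite{LHW}. It gives $\|G(s,y+h)-G(s,y)\|_p\lesssim |h|^H$, so the contribution is at most
\[
|S_s|\int_{|h|\le |S_s|}|h|^{4H-2}\,\dd h\lesssim m^{4H}.
\]
For part (ii) we bound $\|G(s,y)\|_p\lesssim M^H$, since every point of $R_z$ is within distance $\lesssim M$ of $z$. We then use the elementary estimate
\[
\int_{S_s}\int_{y+h\notin S_s}|h|^{2H-2}\,\dd h\,\dd y\lesssim |S_s|^{2H},
\]
which is finite precisely because $H<1/2$ controls the zero-extension singularity. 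This gives $M^{2H}m^{2H}$.

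Both pieces are bounded by $M^{2H}m^{2H}$, since $m\le M$. Integrating over a time range of length $\lesssim M$ yields
\[
\|\mathcal R\|_p^2\lesssim M^{2H}\,m^{2H}M\asymp M^{2H}\,\Psi_H(h_1,h_2),
\]
using Proposition~\ref{prop:variance}. This is \eqref{eq:main}, with constants depending only on the global moment and Hölder bounds, hence uniform in $z$ and in the aspect ratio.

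The hard part is making the interior estimate rigorous. In the rough regime $G$ is not pointwise Hölder, so its increments must be controlled in the $\mathcal Z^p$-type seminorm, localized to a section. I would first try to prove a lemma bounding $\sup_{t}\sup_{|h|\le1}|h|^{-H}\|u(t,\cdot+h)-u(t,\cdot)\|_{L^p(\Omega)}$, uniformly in $x$, by a Gronwall argument in the norm of \cite{LHW}. I would then check carefully that the cross terms in the boundary estimate do not lose a factor of $M/m$. The danger is in sections near the corners of $R_z$, where $|S_s|$ degenerates; there the bound $|S_s|^{2H}\le m^{2H}$ still suffices.
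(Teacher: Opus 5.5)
Your proposal is correct and follows essentially the same route as the paper: characteristic cancellation, the BDG bound \eqref{eq:BDG}, and the interior/boundary splitting of the zero-extended coefficient difference on each section (the paper's Lemma~\ref{lem:localized-energy}), with Proposition~\ref{prop:regularity} giving $M_1\lesssim1$ and $M_0\lesssim M^H$. The paper integrates $|I_q|^{2H}$ exactly to obtain $4\Psi_H(h_1,h_2)$, whereas you bound $|I_q|\leq\sqrt2\,m$ and invoke $\Psi_H\asymp m^{2H}M$; either way the argument closes.

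The ``hard part'' you flag is not an obstacle, because only $L^p(\Omega)$-H\"older continuity is needed. The paper obtains it in Proposition~\ref{prop:regularity} directly from the uniform bounds on $\norm{u(t,x)}_{L^p(\Omega)}$ and $\Nn_{\frac12-H,p}u(t,x)$ in Theorem~\ref{thm:wellposedness}, with no Gronwall step. Also, your BDG display should carry $\int_\R\norm{\cdot}_{L^p(\Omega)}^2\,\dd y$ as in \eqref{eq:BDG} rather than an $L^p(\Omega\times\R)$ norm; the former is what your subsequent computations actually use.
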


 The local-linearization theorem provides a Gaussian approximation
for mixed increments, which naturally suggests quadratic-variation
statistics based on their squares and, in turn, diffusion-parameter
estimation from discrete observations. For linear stochastic wave
equations, Khalil et al.~\cite{KhalilTudorZili} studied spatial
quadratic variations under additive space-time white noise, while
Khalil and Tudor~\cite{KhalilTudor} analyzed spatial quadratic
variations and constructed a consistent estimator of the Hurst
parameter for noise that is fractional in time and white in space.

An additional feature of the rough regime emerges in the
quadratic-variation analysis. Since $H<1/2$, the Gaussian mixed
increments associated with distinct grid cells are negatively
correlated. Combined with the rectangular partition structure,
this yields a uniform covariance row bound and removes any
aspect-ratio restriction from the weighted Gaussian array estimate.
Consequently, the rectangular quadratic variations converge in
$L^p(\Omega)$ for arbitrary anisotropic meshes, leading in turn
to a consistent estimator of the diffusion multiplier.

 The remainder of the paper is organized as follows.
Section~\ref{sec:preliminaries} develops the analytic estimates
needed for the rough-noise regime, including wave-kernel bounds and a localized
fractional-energy estimate. Section~\ref{sec:geometry} analyzes
the characteristic geometry of mixed null-coordinate increments,
derives the exact variance of the associated Gaussian increment,
and proves the bi-parameter local linearization theorem.
  Finally, Section~\ref{sec:statistics} studies anisotropic rectangular
quadratic variations and diffusion-parameter estimation, and concludes
with a numerical illustration of the estimator.

\section{{Rough-noise estimates}}\label{sec:preliminaries}

\subsection{Rough-noise framework}\label{subsec:framework}

We follow the stochastic integration framework of Liu
et al.~\cite[Section~2]{LHW}. Let $\Hh_H$ denote the completion
of $C_c^\infty(\R)$ under the norm induced by the inner product
\begin{equation}\label{eq:spatial-inner}
\langle f,g\rangle_{\Hh_H}
=
a_H\int_{\R^2}
[f(y)-f(z)][g(y)-g(z)]|y-z|^{2H-2}\dd y\dd z,
\end{equation}
where
\begin{equation}\label{eq:aH}
a_H=H\left(\frac12-H\right).
\end{equation}
The noise $W$ extends to an isonormal Gaussian process over
the space-time Hilbert space $\Hh_T=L^2([0,T];\Hh_H)$.
With this normalization, the indicator of every finite interval
$(a,b)$ belongs to $\Hh_H$ and satisfies
\begin{equation}\label{eq:indicator-norm}
\norm{\1_{(a,b)}}_{\Hh_H}^2=|b-a|^{2H}.
\end{equation}

Stochastic integration with respect to $W$ is defined for
admissible predictable $\Hh_H$-valued processes as
in~\cite[Definition~2.1 and Proposition~2.2]{LHW}.  Here,  the
predictability is understood with respect to the usual
augmentation $(\mathcal F_t)_{t\geq0}$ of the natural
filtration generated by $W$.
For such a process with a jointly predictable random-field
representative $g$, the moment estimate
in~\cite[Proposition~2.3, (2.5)]{LHW} gives, for
$2\leq p<\infty$,
\begin{equation}\label{eq:BDG}
\left\|\int_0^T\int_\R g(s,y)\,W(\dd s,\dd y)\right\|_{L^p(\Omega)}^2
\leq C_{p,H}\int_0^T\int_{\R^2}
\norm{\mathfrak D_h g(s,y)}_{L^p(\Omega)}^2
|h|^{2H-2}\dd y\dd h\dd s.
\end{equation}

We use the pointwise seminorm
introduced in~\cite[(2.4)]{LHW}, 
\begin{equation}\label{eq:Npoint}
\Nn_{\frac12-H, p}f(t,x)
=
\left(
\int_\R
\norm{\mathfrak D_h f(t,x)}_{L^p(\Omega)}^2
|h|^{2H-2}\dd h
\right)^{1/2}.
\end{equation}
 
 We next record a consequence of
Condition~\ref{cond:main}\textnormal{(A2)} that allows us to
use arbitrarily large finite moment orders.
The boundedness and H\"older continuity of $I_0$ imply
\begin{equation}\label{eq:I0-bounds}
\begin{aligned}
\sup_{t\in[0,T]}
\norm{I_0(t,\cdot)}_{L^\infty(\R)}
&\leq K_0(T),\\
\sup_{t\in[0,T]}
\norm{\mathfrak D_h I_0(t,\cdot)}_{L^\infty(\R)}
&\leq 2K_0(T)(|h|^H\wedge1).
\end{aligned}
\end{equation} 
To obtain higher integrability, fix $q>p_0$ and set
$\vartheta=p_0/q\in(0,1)$.
  Interpolation between $L^{p_0}(\R)$ and $L^\infty(\R)$ gives
$$
\norm{\mathfrak D_h I_0(t,\cdot)}_{L^q(\R)}
\leq
\norm{\mathfrak D_h I_0(t,\cdot)}_{L^{p_0}(\R)}^\vartheta
\big[2K_0(T)(|h|^H\wedge1)\big]^{1-\vartheta},
$$
with an analogous estimate for $I_0(t,\cdot)$ itself.
Since $H\in(1/4,1/2)$,
$$
\int_\R (|h|^{2H}\wedge1)|h|^{2H-2}\dd h
=
\frac{2}{4H-1}+\frac{2}{1-2H}
<\infty.
$$
Applying H\"older's inequality  with respect to the
measure $|h|^{2H-2}\dd h$, and taking suprema over
$t\in[0,T]$, therefore yields
$$
\norm{I_0}_{\mathcal Z^q(T)}
\leq
C_H K_0(T)^{1-\vartheta}
\norm{I_0}_{\mathcal Z^{p_0}(T)}^\vartheta.
$$
The required continuity follows from~\textnormal{(A2)}.
Together with the assumed case $q=p_0$, this proves that
$I_0\in\mathcal Z^q(T)$ for every  $q\geq p_0$.
 
 The one-dimensional wave kernel is given by
\begin{equation}\label{eq:kernel-I0}
G_t(x)
 =\frac12\1_{\{|x|<t\}},\qquad t>0.
\end{equation}
With $I_0$ defined in~\eqref{eq:I0}, we interpret
\eqref{eq:SWE} in the mild sense: 
$$
u(t,x)
=
I_0(t,x)
+\int_0^t\int_\R
G_{t-s}(x-y)F(u(s,y))\,W(\dd s,\dd y).
$$ 
 
 Combining \cite[Theorem~2.6]{LHW} with the uniform moment
estimates in \cite[Proposition~3.3]{LHW} yields the following
result. Indeed, for every finite $q\geq p_0$,
Condition~\ref{cond:main}\textnormal{(A2)} and the preceding
interpolation imply $I_0\in\mathcal Z^q(T)$, while
\textnormal{(A2)} also gives
$$
\sup_{(t,x)\in[0,T]\times\R}
\mathcal N_{\frac12-H,q}I_0(t,x)<\infty.
$$
The solutions obtained at different moment orders are identified
by pathwise uniqueness. For $2\leq p<p_0$, the corresponding
bounds follow by monotonicity of the $L^p(\Omega)$ norms.

\begin{theorem}\label{thm:wellposedness}
Assume Condition~\ref{cond:main}. For every $T>0$,
equation~\eqref{eq:SWE} admits a unique   
mild solution $u$ on $[0,T]\times\R$ with continuous sample
paths. Moreover, for every $2\leq p<\infty$,
$$
M_p(T)
:=
\sup_{(t,x)\in[0,T]\times\R}
\left(
\norm{u(t,x)}_{L^p(\Omega)}
+\Nn_{\frac12-H, p}u(t,x)
\right)
<\infty.
$$
\end{theorem}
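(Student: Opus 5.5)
The statement, Theorem~\ref{thm:wellposedness}, is essentially an assembly of results from~\cite{LHW}, so the plan is to verify its hypotheses and then patch together the moment orders.

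\textbf{Step 1: verify the hypotheses of \cite{LHW}.} Fix $T>0$ and a finite $q\geq p_0$. By the interpolation argument just given, $I_0\in\mathcal Z^q(T)$. I will also check that
\[
\sup_{(t,x)\in[0,T]\times\R}\Nn_{\frac12-H,q}I_0(t,x)<\infty .
\]
Since $I_0$ is deterministic, $\Nn_{\frac12-H,q}I_0(t,x)^2$ is bounded by
\[
\int_\R \big(2K_0(T)(|h|^H\wedge1)\big)^2|h|^{2H-2}\dd h .
\]
This is finite by the integral computed above, which uses $H>1/4$ near the origin and $H<1/2$ at infinity. Assumption (A1) gives $F(0)=0$ and $F$ globally Lipschitz, which are the coefficient hypotheses of \cite[Theorem~2.6]{LHW}. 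The condition $p_0>2/(4H-1)$ is exactly the moment threshold required there.

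\textbf{Step 2: solve at each moment order.} For each $q\geq p_0$, \cite[Theorem~2.6]{LHW} yields a unique mild solution $u^{(q)}$ in $\mathcal Z^q(T)$ with continuous paths. The uniform bound \cite[Proposition~3.3]{LHW}, fed with the pointwise bounds on $I_0$ from Step~1, gives
\[
\sup_{(t,x)}\Big(\norm{u^{(q)}(t,x)}_{L^q(\Omega)}+\Nn_{\frac12-H,q}u^{(q)}(t,x)\Big)<\infty .
\]

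\textbf{Step 3: identify the solutions across $q$.} Each $u^{(q)}$ with $q\geq p_0$ is in particular a solution in $\mathcal Z^{p_0}(T)$, because $\mathcal Z^q$ controls $\mathcal Z^{p_0}$ locally. Uniqueness at level $p_0$ then forces $u^{(q)}=u^{(p_0)}$ as versions, and path continuity upgrades this to indistinguishability.

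This comparison is the only delicate point. The $\mathcal Z^q$ norms involve $L^q(\Omega\times\R)$, and on $\R$ these are not monotone in $q$. I would therefore do the comparison through the mild equation directly. I would estimate $\norm{u^{(q)}(t,x)-u^{(p_0)}(t,x)}_{L^{p_0}(\Omega)}$, together with its $\Nn_{\frac12-H,p_0}$ seminorm, by \eqref{eq:BDG}, and close with a Gronwall argument. Both solutions have finite pointwise $p_0$-moments uniformly in $(t,x)$, so the sup-type quantities in this argument are finite. This is the same Gronwall scheme that underlies uniqueness in \cite{LHW}.

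\textbf{Step 4: small moment orders.} For $2\leq p<p_0$, the bound follows from $\norm{\cdot}_{L^p(\Omega)}\leq\norm{\cdot}_{L^{p_0}(\Omega)}$, applied both to $u$ and inside the $h$-integral defining $\Nn$.
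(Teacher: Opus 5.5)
Your Steps~1, 2 and~4 coincide with the paper's argument: interpolation gives $I_0\in\mathcal Z^q(T)$ and $\sup\Nn_{\frac12-H,q}I_0<\infty$, then \cite[Theorem~2.6, Proposition~3.3]{LHW} apply, and monotonicity of $L^p(\Omega)$ norms handles $2\leq p<p_0$. One small point in Step~1: what (A1) must supply is also that $F'$ is Lipschitz, not only that $F$ is Lipschitz with $F(0)=0$.

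The genuine gap is Step~3. You are right that $u^{(q)}\in\mathcal Z^q(T)$ does not put $u^{(q)}$ in $\mathcal Z^{p_0}(T)$. However, your substitute, a Gronwall argument for $w=u^{(q)}-u^{(p_0)}$ in pointwise $L^{p_0}(\Omega)$ norms via \eqref{eq:BDG}, does not close for $H<1/2$. The $\Nn_{\frac12-H,p_0}$ part of the integrand requires the spatial increment of $F(u_1)-F(u_2)$, and the best pointwise bound is
\[
\bigl|\mathfrak D_h[F(u_1)-F(u_2)](s,y)\bigr|
\leq\norm{F'}_\infty|\mathfrak D_hw(s,y)|
+\mathrm{Lip}(F')\,|w(s,y)|\bigl(|\mathfrak D_hu_1(s,y)|+|\mathfrak D_hu_2(s,y)|\bigr).
\]
In $L^{p_0}(\Omega)$, the product term can only be controlled by $\norm{w(s,y)}_{L^{2p_0}(\Omega)}\Nn_{\frac12-H,2p_0}u_i(s,y)$. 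This is a higher moment of $w$ than the one on the left-hand side. It also needs $2p_0$-moments of $u^{(p_0)}$, which the level-$p_0$ theory does not provide before the identification.

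The cruder bound $\mathrm{Lip}(F)\bigl(|w(s,y+h)|+|w(s,y)|\bigr)$ stays at the right moment order, but it is not integrable against $|h|^{2H-2}$ near $h=0$. Splitting the $h$-integral at $|h|=\varepsilon$ only yields a bound of the form $C\varepsilon^{4H-1}\exp(Ct\varepsilon^{2H-1})$, which does not vanish for any choice of $\varepsilon$. So no closed Gronwall inequality for $\sup_{s\leq t,x}\bigl(\norm{w(s,x)}_{L^{p_0}(\Omega)}^2+\Nn_{\frac12-H,p_0}w(s,x)^2\bigr)$ is available. Your remark that this is the scheme behind uniqueness in \cite{LHW} should be dropped.

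This obstruction is exactly why uniqueness in the rough regime is \emph{pathwise}. In the spirit of \cite{HuHuangLeNualartTindel}, one localizes with stopping times that control the H\"older norms of the two solutions. Before the stopping time, $|\mathfrak D_hu_i|$ is bounded pathwise by a constant times $|h|^\beta$, so the product term is linear in $w$ and the estimate closes for the stopped difference. The paper's proof simply invokes this pathwise uniqueness to identify the solutions constructed at different moment orders. You should do the same instead of a direct moment comparison, checking that $u^{(q)}$ and $u^{(p_0)}$ both belong to the class of solutions for which that uniqueness is established.
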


The corresponding additive-noise solution with zero initial
data is the stochastic convolution
$$
U(t,x)
=
\int_0^t\int_\R
G_{t-s}(x-y)\,W(\dd s,\dd y).
$$
It is defined on the same probability space and driven by
the same noise $W$ as $u$.

\subsection{Kernel estimates and pointwise moment regularity}
\label{subsec:regularity}
 
For a deterministic function $k\in L^2(\R)$, define its possibly
infinite fractional energy by
$$
\energy{k}
=
\int_{\R^2}
|\mathfrak D_hk(y)|^2|h|^{2H-2}\dd y\dd h.
$$
For every finite interval $I=(a,b)$, the normalization
in~\eqref{eq:spatial-inner} and~\eqref{eq:indicator-norm} gives
\begin{equation}\label{eq:indicator-energy}
\energy{\1_I}
=
a_H^{-1}\norm{\1_I}_{\Hh_H}^2
=
\frac{2}{H(1-2H)}|b-a|^{2H}.
\end{equation} 

We use the Fourier transform convention
$$
\widehat k(\xi)=\int_\R e^{-i\xi y}k(y)\dd y,
\qquad k\in L^1(\R)\cap L^2(\R),
$$
and extend the transform to $L^2(\R)$ by continuity.
With this convention, Plancherel's identity reads
$$
\norm{k}_{L^2(\R)}^2
=
\frac1{2\pi}\int_\R|\widehat k(\xi)|^2\dd\xi.
$$
Applying Plancherel's identity to $\mathfrak D_hk$ and then
using Tonelli's theorem, we obtain
\begin{equation}\label{eq:Fourier-energy}
\energy{k}
=
b_H\int_\R|\widehat k(\xi)|^2|\xi|^{1-2H}\dd\xi,
\end{equation}
where the equality holds in $[0,\infty]$, and 
$$
b_H=
\frac1{2\pi}\int_\R|e^{iz}-1|^2|z|^{2H-2}\dd z.
$$

The following lemma collects the spatial and temporal increment
estimates for the wave kernel, together with its integrated
$L^2$ norm and fractional energy near time zero.

\begin{lemma}\label{lem:kernel}
Let $T>0$ and $H\in(1/4,1/2)$. The following estimates hold.
\begin{itemize}
\item[(i)]
For every $0<|h|\leq1$,
\begin{equation}\label{eq:kernel-space1}
\int_0^T
\norm{\mathfrak D_hG_r}_{L^2(\R)}^2\dd r
\leq C_T|h|,
\end{equation}
and
\begin{equation}\label{eq:kernel-space2}
\int_0^T
\energy{\mathfrak D_hG_r}\dd r
\leq C_{T,H}|h|^{2H}.
\end{equation}

\item[(ii)]
For every $0<h\leq1$,
\begin{equation}\label{eq:kernel-time1}
\int_0^T
\norm{G_{r+h}-G_r}_{L^2(\R)}^2\dd r
\leq C_Th,
\end{equation}
and
\begin{equation}\label{eq:kernel-time2}
\int_0^T
\energy{G_{r+h}-G_r}\dd r
\leq C_{T,H}h^{2H}.
\end{equation}

\item[(iii)]
For every $0<h\leq1$,
\begin{equation}\label{eq:kernel-new}
\int_0^h
\norm{G_r}_{L^2(\R)}^2\dd r
=
\frac{h^2}{4},
\qquad
\int_0^h
\energy{G_r}\dd r
=
C_Hh^{2H+1},
\end{equation}
where
$$
C_H
=
\frac{2^{2H-1}}{H(1-2H)(2H+1)}.
$$
\end{itemize}
All constants are positive and independent of $h$.
\end{lemma}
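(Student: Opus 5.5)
The plan is to compute everything explicitly, using either direct interval geometry or the Fourier representation \eqref{eq:Fourier-energy}. The kernel is an indicator, $G_r=\frac12\1_{(-r,r)}$, so every quantity in the lemma is the $L^2$ norm or the fractional energy of a combination of at most four interval indicators. I would handle the $L^2$ bounds by measuring symmetric differences. I would handle the energy bounds either with \eqref{eq:indicator-energy} combined with sub-additivity of $\energy{\cdot}^{1/2}$ (it is a seminorm), or through \eqref{eq:Fourier-energy} using the explicit transform $\widehat{G_r}(\xi)=\sin(r\xi)/\xi$.

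For (iii) I compute exactly. First, $\norm{G_r}_{L^2}^2=\frac14\cdot 2r=r/2$, and integrating gives $h^2/4$. Second, \eqref{eq:indicator-energy} with $|b-a|=2r$ gives $\energy{G_r}=\frac14\cdot\frac{2}{H(1-2H)}(2r)^{2H}=\frac{2^{2H-1}}{H(1-2H)}r^{2H}$. Integrating over $(0,h)$ produces the stated $C_H h^{2H+1}$.

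For (i), $\mathfrak D_hG_r=\frac12(\1_{(-r-h,r-h)}-\1_{(-r,r)})$. When $2r>|h|$ this is $\frac12$ times the difference of two disjoint intervals of length $|h|$, so the squared $L^2$ norm is at most $|h|/2$; when $2r\le |h|$ it is at most $r\le |h|$. Integrating over $[0,T]$ gives $C_T|h|$. For the energy, sub-additivity together with \eqref{eq:indicator-energy} bounds $\energy{\mathfrak D_hG_r}$ by $C_H(|h|\wedge 2r)^{2H}\le C_H|h|^{2H}$ uniformly in $r$, and integrating gives $C_{T,H}|h|^{2H}$.

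Part (ii) is analogous. Here $G_{r+h}-G_r=\frac12(\1_{(-r-h,-r]}+\1_{[r,r+h)})$, whose squared $L^2$ norm is $h/2$. Its energy is at most $C_Hh^{2H}$ by the same triangle-inequality argument applied to two intervals of length $h$. Integrating over $[0,T]$ finishes the proof.

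The only mildly delicate point is justifying sub-additivity, i.e.\ that $\energy{\cdot}^{1/2}$ is a seminorm; this is immediate because it is an $L^2$ norm of $(y,h)\mapsto\mathfrak D_hk(y)$ with respect to the weighted measure. I therefore expect no real obstacle. If a sharper constant were wanted, the Fourier route would give it: $\int|\sin(r\xi)|^2|1-e^{-ih\xi}|^2|\xi|^{-1-2H}\dd\xi\le C|h|^{2H}$ after the scaling $\xi\mapsto\xi/|h|$, with convergence guaranteed because $2H<1$ at infinity and $-1-2H+2>-1$ at the origin.
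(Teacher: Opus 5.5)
Your proof is correct. Part (iii) and the two $L^2$ bounds match the paper's computation; the paper writes the overlap length as $(2r-|h|)_+$ and obtains $\frac12\min\{|h|,2r\}$, which is your case split in one line.

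The genuine difference is in the energy bounds \eqref{eq:kernel-space2} and \eqref{eq:kernel-time2}.

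\textbf{The paper's route.} The paper works entirely on the Fourier side via \eqref{eq:Fourier-energy}. It writes $\widehat{\mathfrak D_hG_r}(\xi)=(e^{ih\xi}-1)\sin(r\xi)/\xi$ and discards $\sin^2(r\xi)\le 1$. For the temporal increment it uses $|\sin((r+h)\xi)-\sin(r\xi)|^2\le|e^{ih\xi}-1|^2$. Rescaling $\eta=h\xi$ then gives $b_HTJ_H|h|^{2H}$.

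\textbf{Your route.} You stay in physical space. You split $\mathfrak D_hG_r$ (resp.\ $G_{r+h}-G_r$) into two interval indicators of length $|h|\wedge 2r$ (resp.\ $h$). You then combine the exact formula \eqref{eq:indicator-energy} with Minkowski's inequality for the seminorm $\mathscr E_H(\cdot)^{1/2}$. That seminorm property is indeed immediate, as you say, since $\mathscr E_H(k)^{1/2}$ is an $L^2$ norm of a quantity linear in $k$.

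\textbf{What each buys.} Your argument is more elementary and avoids the Fourier constants $b_H$ and $J_H$. It also gives the explicit pointwise bound $\mathscr E_H(\mathfrak D_hG_r)\le \frac{2}{H(1-2H)}(|h|\wedge 2r)^{2H}$, which is slightly sharper for small $r$. The paper's argument never uses that $G_r$ is an indicator. It only needs $|\widehat G_r(\xi)|\le|\xi|^{-1}$ and $|\widehat G_{r+h}(\xi)-\widehat G_r(\xi)|\le|e^{ih\xi}-1|\,|\xi|^{-1}$. So it carries over to kernels with no interval decomposition, such as Klein--Gordon-type kernels.

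\textbf{A small correction to your closing aside.} Integrability of $|e^{i\eta}-1|^2|\eta|^{-1-2H}$ at infinity follows from $H>0$, not from $2H<1$. The condition $H<1/2$ is what makes the constant $b_H$ in \eqref{eq:Fourier-energy} finite. This aside does not affect your main argument.
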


  \begin{proof}
\noindent\textit{(i) Spatial increments.} For $r>0$ and $0<|h|\leq1$, the intervals $(-r,r)$ and
$(-r-h,r-h)$ have intersection length $(2r-|h|)_+$,
where $a_+=\max\{a,0\}$. Hence,
$$
\begin{aligned}
\norm{\mathfrak D_hG_r}_{L^2(\R)}^2
&=
\frac14\int_\R
\big[
\1_{(-r-h,r-h)}(x)-\1_{(-r,r)}(x)
\big]^2\dd x\\
&=
\frac14\big[4r-2(2r-|h|)_+\big]\\
&=
\frac12\min\{|h|,2r\}
\leq \frac{|h|}{2}.
\end{aligned}
$$
Integrating over $r\in(0,T)$ proves
\eqref{eq:kernel-space1}.

For the fractional-energy estimate, use
$$
\widehat G_r(\xi)=\frac{\sin(r\xi)}{\xi},
\qquad
\widehat{\mathfrak D_hG_r}(\xi)
=(e^{ih\xi}-1)\widehat G_r(\xi),
$$
with the values at $\xi=0$ understood by continuity.
Set
$$
J_H
:=
\int_\R |e^{i\eta}-1|^2|\eta|^{-1-2H}\dd\eta.
$$
Since $|e^{i\eta}-1|\leq\min\{|\eta|,2\}$,
$$
J_H
\leq
2\int_0^1\eta^{1-2H}\dd\eta
+
8\int_1^\infty\eta^{-1-2H}\dd\eta
<\infty.
$$
The Fourier representation~\eqref{eq:Fourier-energy},
Tonelli's theorem, and $\sin^2(r\xi)\leq1$ therefore give
$$
\begin{aligned}
\int_0^T\energy{\mathfrak D_hG_r}\dd r
&=
b_H\int_\R
|e^{ih\xi}-1|^2|\xi|^{-1-2H}
\left(\int_0^T\sin^2(r\xi)\dd r\right)\dd\xi\\
&\leq
b_HT\int_\R
|e^{ih\xi}-1|^2|\xi|^{-1-2H}\dd\xi\\
&=
b_HTJ_H|h|^{2H},
\end{aligned}
$$
where the last equality follows from $\eta=h\xi$.
This proves~\eqref{eq:kernel-space2}.

\noindent\textit{(ii) Temporal increments.}
For $h>0$,
$$
G_{r+h}-G_r
=
\frac12
\big[
\1_{(-r-h,-r)}+\1_{(r,r+h)}
\big]
\qquad\text{a.e.}
$$
The two intervals are disjoint and each has length $h$,
so
$$
\norm{G_{r+h}-G_r}_{L^2(\R)}^2=\frac h2,
\qquad
\int_0^T\norm{G_{r+h}-G_r}_{L^2(\R)}^2\dd r
=\frac{Th}{2}.
$$
This proves~\eqref{eq:kernel-time1}.

Moreover, the sine-difference identity yields
$$
|\sin((r+h)\xi)-\sin(r\xi)|^2
\leq
4\sin^2(h\xi/2)
=
|e^{ih\xi}-1|^2.
$$
Thus~\eqref{eq:Fourier-energy} and the scaling calculation
in~\textnormal{(i)} imply
$$
\begin{aligned}
\int_0^T\energy{G_{r+h}-G_r}\dd r
&\leq
b_HT\int_\R
|e^{ih\xi}-1|^2|\xi|^{-1-2H}\dd\xi\\
&=
b_HTJ_Hh^{2H},
\end{aligned}
$$
proving~\eqref{eq:kernel-time2}.

\noindent\textit{(iii) Integrals near time zero.}
The definition of $G_r$ and~\eqref{eq:indicator-energy} give
$$
\norm{G_r}_{L^2(\R)}^2=\frac r2,
\qquad
\energy{G_r}
=
\frac14\energy{\1_{(-r,r)}}
=
\frac{2^{2H-1}}{H(1-2H)}r^{2H}.
$$
Integrating these identities over $r\in(0,h)$ yields
$$
\int_0^h\norm{G_r}_{L^2(\R)}^2\dd r
=
\frac{h^2}{4},
\qquad
\int_0^h\energy{G_r}\dd r
=
\frac{2^{2H-1}h^{2H+1}}
{H(1-2H)(2H+1)},
$$
which proves~\eqref{eq:kernel-new}.
\end{proof}
 
 The following proposition can be obtained by adapting the
stochastic-convolution arguments of Liu et al.
 \cite[Section~4.1]{LHW} to pointwise $L^p(\Omega)$
estimates under the present assumptions. For completeness,
we provide a detailed proof, making explicit the exponent
$H$ needed below.
\begin{proposition}\label{prop:regularity}
For every $T>0$ and finite $p\geq2$, there exists $C=C(T,p,H,F,I_0)$ such that  
\begin{equation}\label{eq:exact-holder}
\norm{u(t,x)-u(s,y)}_{L^p(\Omega)}
\leq C\big(|t-s|^H+|x-y|^H\big)
\end{equation}
for all $s,t\in[0,T]$ and $x,y\in\R$.  
\end{proposition}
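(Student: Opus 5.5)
We split the increment into a spatial and a temporal part, $u(t,x)-u(s,y)=[u(t,x)-u(t,y)]+[u(t,y)-u(s,y)]$, and bound each through the mild formulation. The deterministic part $I_0$ is handled directly by the Hölder bound in Condition~\ref{cond:main}\textnormal{(A2)}. Only the stochastic convolution $\Phi(t,x)=\int_0^t\int_\R G_{t-r}(x-z)F(u(r,z))\,W(\dd r,\dd z)$ needs work. For it we apply the moment bound~\eqref{eq:BDG} to the integrand $g(r,z)=k_r(z)F(u(r,z))$, where $k_r$ is a deterministic kernel difference:
\[
k_r=\mathfrak D_{h}G_{t-r}\ \text{(spatial case, } h=y-x\text{)},\qquad k_r=G_{t-r}-G_{s-r}\1_{\{r<s\}}\ \text{(temporal case)}.
\]
Without loss of generality we may take $|t-s|,|x-y|\le1$. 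Otherwise the bound follows trivially from Theorem~\ref{thm:wellposedness}, since $\norm{u(t,x)}_{L^p}\le M_p(T)$.

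The key step is a product-rule splitting of the fractional difference:
\[
\mathfrak D_\eta g(r,z)=\big(\mathfrak D_\eta k_r(z)\big)F(u(r,z+\eta))+k_r(z)\,\mathfrak D_\eta F(u(r,z)).
\]
For the first term, $\norm{F(u)}_{L^p}\le \norm{F'}_\infty M_p(T)$ uniformly, using $F(0)=0$. Its contribution is therefore bounded by $C\int_0^t\energy{k_r}\dd r$. For the second term, $F$ is Lipschitz, so $\norm{\mathfrak D_\eta F(u(r,z))}_{L^p}\le\norm{F'}_\infty\norm{\mathfrak D_\eta u(r,z)}_{L^p}$. Integrating in $\eta$ against $|\eta|^{2H-2}$ gives $\Nn_{\frac12-H,p}u(r,z)^2\le M_p(T)^2$, so this contribution is bounded by $C\int_0^t\norm{k_r}_{L^2}^2\dd r$. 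Altogether,
\[
\norm{\Phi(t,x)-\Phi(s,y)}_{L^p}^2\le C\int_0^T\big(\energy{k_r}+\norm{k_r}_{L^2(\R)}^2\big)\dd r .
\]

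The kernel integrals then come from Lemma~\ref{lem:kernel}. In the spatial case, \eqref{eq:kernel-space1} and \eqref{eq:kernel-space2} give $C(|h|+|h|^{2H})\le C|h|^{2H}$. In the temporal case with $s<t$, $h=t-s$, we write $k_r=(G_{h+(s-r)}-G_{s-r})\1_{r<s}+G_{t-r}\1_{s\le r<t}$. The first piece is controlled by \eqref{eq:kernel-time1} and \eqref{eq:kernel-time2}, and the second by \eqref{eq:kernel-new}, which gives $h^2+h^{2H+1}$. Since $h\le1$, everything is $\le Ch^{2H}$. Taking square roots yields~\eqref{eq:exact-holder}.

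The main obstacle is justifying the splitting and the use of~\eqref{eq:BDG} for an integrand $k_r F(u)$ whose kernel is discontinuous. The second term pairs $k_r(z)$ with the increment of $F(u)$ in a way that requires $\sup_z\Nn_{\frac12-H,p}u$ rather than an $L^2$-in-space norm. This is exactly why the uniform pointwise bound $M_p(T)$ from Theorem~\ref{thm:wellposedness} is essential. We would also check admissibility of the integrand as in~\cite{LHW}, for instance by approximating $k_r$ by smooth kernels and passing to the limit via Fatou.
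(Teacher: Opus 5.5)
Your proposal is correct and follows essentially the same route as the paper. You write $u=I_0+\Phi$, use the product identity for $\mathfrak D_\eta(k_rF(u))$ together with the uniform bound $M_p(T)$ to reduce \eqref{eq:BDG} to $C\int(\energy{k_r}+\norm{k_r}_{L^2(\R)}^2)\,\dd r$, invoke Lemma~\ref{lem:kernel}, and dispose of large displacements by the uniform moment bound. The only cosmetic difference is that you treat the temporal increment with a single kernel whose two pieces have disjoint time supports, whereas the paper splits it as $B_1+B_2$ and applies the triangle inequality.
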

 
 \begin{proof}
Fix $T>0$ and $2\leq p<\infty$. Write $u=I_0+\Phi$,
where $\Phi$ is the stochastic convolution.  
Since $F(0)=0$ and $F$ is globally Lipschitz,
Theorem~\ref{thm:wellposedness} gives
$$
\sup_{(r,z)\in[0,T]\times\R}
\left(
\norm{F(u(r,z))}_{L^p(\Omega)}
+\Nn_{\frac12-H, p}(F(u))(r,z)
\right)
\leq C.
$$

For a deterministic measurable kernel $K$ with bounded
spatial support, the product identity
$$
\mathfrak D_\ell(KF(u))(r,z)
=
(\mathfrak D_\ell K)(r,z)F(u(r,z+\ell))
+
K(r,z)\mathfrak D_\ell(F(u))(r,z)
$$
and~\eqref{eq:Npoint} imply
$$
\begin{aligned}
&\int_{\R^2}
\norm{\mathfrak D_\ell(KF(u))(r,z)}_{L^p(\Omega)}^2
|\ell|^{2H-2}\dd z\dd\ell\\
 \leq &\,
C\energy{K(r,\cdot)}
+
2\int_\R |K(r,z)|^2
\big[\Nn_{\frac12-H, p}(F(u))(r,z)\big]^2\dd z\\
 \leq &\,
C\left(
\energy{K(r,\cdot)}
+\norm{K(r,\cdot)}_{L^2(\R)}^2
\right).
\end{aligned}
$$ 
Applying~\eqref{eq:BDG}, we therefore obtain
\begin{equation}\label{eq:kernel-product}
\left\|
\int_J\int_\R
K(r,z)F(u(r,z))\,W(\dd r,\dd z)
\right\|_{L^p(\Omega)}^2
\leq
C\int_J
\left(
\energy{K(r,\cdot)}
+\norm{K(r,\cdot)}_{L^2(\R)}^2
\right)\dd r.
\end{equation}

\noindent\textit{(i). Spatial increments.}
For $t\in[0,T]$, $x\in\R$, and $0<|h|\leq1$, apply
\eqref{eq:kernel-product} with $J=(0,t)$ and
$$
K(r,z)=G_{t-r}(x+h-z)-G_{t-r}(x-z).
$$
Translation and reflection preserve the kernel norms, so
\eqref{eq:kernel-space1} and~\eqref{eq:kernel-space2} yield
\begin{equation}\label{eq:Phi-space}
\norm{\mathfrak D_h\Phi(t,x)}_{L^p(\Omega)}^2
\leq
C\big(|h|^{2H}+|h|\big)
\leq C|h|^{2H},
\end{equation}
where   $2H<1$ is used in the last step.

\noindent\textit{(ii). Temporal increments.}
For $0<h\leq1$ and $0\leq t<t+h\leq T$, write
$$
\Phi(t+h,x)-\Phi(t,x)=B_1+B_2,
$$
where
$$
\begin{aligned}
B_1
&=
\int_0^t\int_\R
\big[G_{t+h-r}(x-z)-G_{t-r}(x-z)\big]
F(u(r,z))\,W(\dd r,\dd z),\\
B_2
&=
\int_t^{t+h}\int_\R
G_{t+h-r}(x-z)F(u(r,z))\,W(\dd r,\dd z).
\end{aligned}
$$
Applying~\eqref{eq:kernel-product} together with
\eqref{eq:kernel-time1} and~\eqref{eq:kernel-time2}
for $B_1$, and~\eqref{eq:kernel-new} for $B_2$, gives
$$
\begin{aligned}
\norm{B_1}_{L^p(\Omega)}^2
&\leq C(h^{2H}+h)\leq Ch^{2H},\\
\norm{B_2}_{L^p(\Omega)}^2
&\leq C(h^{2H+1}+h^2)\leq Ch^{2H+1}.
\end{aligned}
$$
Hence, by the triangle inequality,
\begin{equation}\label{eq:Phi-time}
\norm{\Phi(t+h,x)-\Phi(t,x)}_{L^p(\Omega)}
\leq
C\big(h^H+h^{H+1/2}\big)
\leq Ch^H.
\end{equation}

Combining~\eqref{eq:Phi-space} and~\eqref{eq:Phi-time},
using symmetry in the time variables and
Condition~\ref{cond:main}\textnormal{(A2)}, proves
\eqref{eq:exact-holder} when $|t-s|+|x-y|\leq1$.
For larger displacements,
$\max\{|t-s|,|x-y|\}>1/2$, and the uniform moment bound
in Theorem~\ref{thm:wellposedness} gives the same estimate.
\end{proof}

 \subsection{Localized fractional-energy estimate}
\label{subsec:energy}

We next estimate the fractional energy of a coefficient
localized to a finite interval.   

For $2\leq p<\infty$ and a   measurable map
$A:\R\to L^p(\Omega)$, define
$$
\energyp{p}{A}
=
\int_{\R^2}
\norm{A(y)-A(z)}_{L^p(\Omega)}^2
|y-z|^{2H-2}\dd y\dd z
\in[0,\infty].
$$ 

\begin{lemma}[Localized fractional energy]
\label{lem:localized-energy}
Let $2\leq p<\infty$, let $I=(a,b)$ have length
$\ell=b-a>0$, and let $A:I\to L^p(\Omega)$ be strongly
measurable. Suppose that, for finite constants $M_0,M_1\geq0$
and some $\gamma>1/2-H$,
$$
\sup_{y\in I}\norm{A(y)}_{L^p(\Omega)}\leq M_0,
$$
and
$$
\norm{A(y)-A(z)}_{L^p(\Omega)}
\leq M_1|y-z|^\gamma
\quad\text{for all }y,z\in I.
$$
Define the zero extension of $A$ by
$$
\widetilde A(y)
=
\begin{cases}
A(y), & y\in I,\\
0,    & y\notin I.
\end{cases}
$$
Then
\begin{equation}\label{eq:localized-energy}
\energyp{p}{\widetilde A}
\leq
\frac{2M_0^2}{H(1-2H)}\ell^{2H}
+
\frac{2M_1^2}
{(2H+2\gamma-1)(2H+2\gamma)}
\ell^{2H+2\gamma}.
\end{equation}
\end{lemma}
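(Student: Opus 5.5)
Split $\R^2$ according to where the pair $(y,z)$ lies relative to $I$. The set $I^c\times I^c$ contributes nothing, since $\widetilde A$ vanishes there. By symmetry,
$$
\energyp{p}{\widetilde A}
=\int_{I\times I}\norm{A(y)-A(z)}_{L^p(\Omega)}^2|y-z|^{2H-2}\dd y\dd z
+2\int_{I}\int_{I^c}\norm{A(y)}_{L^p(\Omega)}^2|y-z|^{2H-2}\dd z\dd y .
$$
I will call these the interior term and the boundary-interaction term, and bound each separately.

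\emph{Boundary-interaction term.} Bound $\norm{A(y)}_{L^p(\Omega)}\le M_0$. The remaining integral is deterministic:
$$
2M_0^2\int_I\int_{I^c}|y-z|^{2H-2}\dd z\dd y .
$$
Rather than computing it directly, compare it with the identity~\eqref{eq:indicator-energy}. For the indicator $\1_I$ the interior part vanishes, so
$$
\energy{\1_I}=2\int_I\int_{I^c}|y-z|^{2H-2}\dd z\dd y=\frac{2}{H(1-2H)}\ell^{2H}.
$$
The boundary-interaction term is therefore at most $M_0^2\,\energy{\1_I}=\frac{2M_0^2}{H(1-2H)}\ell^{2H}$, which is exactly the first term of~\eqref{eq:localized-energy}. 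As a sanity check, the elementary computation also works: the inner integral over $z>b$ equals $(b-y)^{2H-1}/(1-2H)$, and integrating over $y\in I$ gives $\ell^{2H}/(2H(1-2H))$. Doubling for the two sides and doubling again for symmetry gives the same constant.

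\emph{Interior term.} Use the Hölder bound to get at most $M_1^2\int_{I\times I}|y-z|^{2\gamma+2H-2}\dd y\dd z$. The exponent satisfies $2\gamma+2H-2>-1$ precisely because $\gamma>1/2-H$, so the integral is finite. Compute it with the standard formula $\int_{I\times I}|y-z|^{\beta}\dd y\dd z=\frac{2\ell^{\beta+2}}{(\beta+1)(\beta+2)}$ for $\beta>-1$, obtained by integrating over $y<z$ and doubling. With $\beta=2H+2\gamma-2$ this gives $\frac{2M_1^2}{(2H+2\gamma-1)(2H+2\gamma)}\ell^{2H+2\gamma}$, the second term. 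Adding the two bounds proves~\eqref{eq:localized-energy}.

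I do not expect a real obstacle. The only points needing care are the following.
\begin{itemize}
\item Tracking the factor $2$ from symmetry in the boundary term, so that the constant matches~\eqref{eq:indicator-energy} exactly.
\item Using the condition $\gamma>1/2-H$ to ensure integrability near the diagonal.
\item Measurability: strong measurability of $A$ makes $(y,z)\mapsto\norm{A(y)-A(z)}_{L^p(\Omega)}$ measurable, so Tonelli applies on each region.
\end{itemize}
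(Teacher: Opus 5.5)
Your proposal is correct and matches the paper's proof: the same split into an interior term and a boundary-interaction term, the H\"older bound with $2H+2\gamma-2>-1$ for the interior term, and the comparison $I_{\mathrm{bdry}}\leq M_0^2\energy{\1_I}$ via~\eqref{eq:indicator-energy} for the boundary term. Your direct computation of the boundary constant is also correct and consistent with~\eqref{eq:indicator-energy}.
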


\begin{proof}
Since $\widetilde A$ vanishes on $I^c$, the contribution
from $I^c\times I^c$ is zero. By symmetry of the two
cross terms,
$$
\energyp{p}{\widetilde A}
=
I_{\mathrm{int}}+I_{\mathrm{bdry}},
$$
where
$$
\begin{aligned}
I_{\mathrm{int}}
&=
\int_{I^2}
\norm{A(y)-A(z)}_{L^p(\Omega)}^2
|y-z|^{2H-2}\dd y\dd z,\\
I_{\mathrm{bdry}}
&=
2\int_I\int_{I^c}
\norm{A(y)}_{L^p(\Omega)}^2
|y-z|^{2H-2}\dd z\dd y.
\end{aligned}
$$
 
Since $2H+2\gamma-2>-1$, the H\"older  continuity gives
$$
\begin{aligned}
I_{\mathrm{int}}
&\leq
2M_1^2\int_a^b\int_a^y
(y-z)^{2H+2\gamma-2}\dd z\dd y\\
&=
\frac{2M_1^2\ell^{2H+2\gamma}}
{(2H+2\gamma-1)(2H+2\gamma)}.
\end{aligned}
$$

For the boundary term, the amplitude bound and
\eqref{eq:indicator-energy} give
$$
I_{\mathrm{bdry}}
\leq M_0^2\energy{\1_I}
=
\frac{2M_0^2}{H(1-2H)}\ell^{2H}.
$$  
Adding the two estimates proves~\eqref{eq:localized-energy}.
\end{proof}
  
\section{{Anisotropic characteristic geometry and local linearization}}\label{sec:geometry}
  
 \subsection{The characteristic parallelogram}

Fix an admissible positive increment $(h_1,h_2)$ at
$z=(\tau,\lambda)$, corresponding to the point
$(t_0,x_0)$.   Set
$$
a=\frac{h_1}{\sqrt2},\qquad
b=\frac{h_2}{\sqrt2},\qquad
a_-=a\wedge b,\qquad
a_+=a\vee b.
$$
The four vertices in   physical coordinates are
$$
\begin{aligned}
P_0&=(t_0,x_0),&
P_1&=(t_0+a,x_0-a),\\
P_2&=(t_0+b,x_0+b),&
P_{12}&=(t_0+a+b,x_0+b-a).
\end{aligned}
$$
Define the characteristic parallelogram by
$$
\mathcal P_z(h_1,h_2)
=
\left\{
(q,y):
\tau<\frac{q-y}{\sqrt2}<\tau+h_1,\quad
\lambda<\frac{q+y}{\sqrt2}<\lambda+h_2
\right\}.
$$
It lies in the time strip $t_0<q<t_0+a+b$.
For $0<\rho<a+b$, its spatial section at time
$q=t_0+\rho$ is
\begin{equation}\label{eq:sections}
I_{t_0+\rho}
=
\big(
x_0+\max\{-\rho,\rho-2a\},\,
x_0+\min\{\rho,2b-\rho\}
\big).
\end{equation}
The section length is therefore
\begin{equation}\label{eq:section-profile}
\ell(\rho)
:=|I_{t_0+\rho}|
=
2\min\{\rho,a,b,a+b-\rho\},
\qquad 0<\rho<a+b.
\end{equation}

Figure~\ref{fig:parallelogram} summarizes the geometry of
$\mathcal P_z(h_1,h_2)$. The left panel shows the characteristic
parallelogram and its spatial section $I_{t_0+\rho}$ at time
$t_0+\rho$, while the right panel displays the corresponding
section-length profile $\ell(\rho)$. In particular, the left
panel makes transparent the bounds
$$
0<\rho<a+b,\qquad
|y-x_0|\leq a\vee b,\qquad
|I_{t_0+\rho}|\leq 2(a\wedge b),
$$
which will be used in the proof of
Theorem~\ref{thm:main}; the section-length profile will also be
used below in the exact variance calculation.

\begin{lemma}[Characteristic cancellation]\label{lem:geometry}
Let $z=(\tau,\lambda)$ and let $(h_1,h_2)$ be an
admissible positive increment. Then
\begin{equation}\label{eq:mixed-integrals}
\begin{aligned}
D_{h_1,h_2}v(z)
&=
\frac12
\int_{\mathcal P_z(h_1,h_2)}
F(u(q,y))\,W(\dd q,\dd y),\\
D_{h_1,h_2}V(z)
&=
\frac12
\int_{\mathcal P_z(h_1,h_2)}
W(\dd q,\dd y).
\end{aligned}
\end{equation}
Consequently, if $z$ corresponds to $(t_0,x_0)$, then
\begin{equation}\label{eq:mixed-remainder-representation}
\begin{aligned}
&D_{h_1,h_2}v(z)
-F(u(t_0,x_0))D_{h_1,h_2}V(z)\\
&\qquad=
\frac12
\int_{\mathcal P_z(h_1,h_2)}
\big[F(u(q,y))-F(u(t_0,x_0))\big]
\,W(\dd q,\dd y).
\end{aligned}
\end{equation}
\end{lemma}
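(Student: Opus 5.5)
The plan is to write each of the four terms of $D_{h_1,h_2}v(z)$ through the mild formulation and to track the indicator functions of the backward light cones. Put $P_0,P_1,P_2,P_{12}$ for the physical points attached to $z$, $z+(h_1,0)$, $z+(0,h_2)$ and $z+(h_1,h_2)$ (see the vertex list above). For a point $P=(t,x)$, the mild equation says
$$
u(P)=I_0(P)+\frac12\int_{\R_+\times\R}\1_{C(P)}(q,y)\,F(u(q,y))\,W(\dd q,\dd y),
$$
where $C(P)=\{(q,y):0<q<t,\ |x-y|<t-q\}$ is the backward cone. Because $G_{t-q}(x-y)=\frac12\1_{C(P)}(q,y)$, the same identity holds for $U$ with $F\equiv1$ and $I_0\equiv0$. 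Since the stochastic integral is linear, proving the lemma comes down to two facts.

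\emph{Deterministic part.} We need $D_{h_1,h_2}$ of $I_0$ to vanish. Write $I_0(t,x)=\phi(x+t)+\psi(x-t)$ with $\phi(s)=\frac12u_0(s)+\frac12\int_0^s v_0$ and $\psi(s)=\frac12u_0(s)-\frac12\int_0^s v_0$. In null coordinates, $x+t=\sqrt2\lambda$ and $x-t=-\sqrt2\tau$. So $I_0$ splits into a function of $\tau$ plus a function of $\lambda$, and any such sum is annihilated by the mixed difference.

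\emph{Indicator identity.} We need
$$
\1_{C(P_{12})}-\1_{C(P_1)}-\1_{C(P_2)}+\1_{C(P_0)}=\1_{\mathcal P_z(h_1,h_2)}
$$
almost everywhere. In null coordinates $(\sigma,\mu)=((q-y)/\sqrt2,(q+y)/\sqrt2)$, the cone of the point with null coordinates $(\tau',\lambda')$ is $\{\sigma<\tau',\ \mu<\lambda',\ \sigma+\mu>0\}$. Call this set $Q(\tau',\lambda')\cap\{\sigma+\mu>0\}$, where $Q(\tau',\lambda')$ is the lower-left quadrant. The quadrant indicator factors as $\1_{\sigma<\tau'}\1_{\mu<\lambda'}$, so its mixed difference is
$$
\1_{\tau<\sigma<\tau+h_1}\,\1_{\lambda<\mu<\lambda+h_2}.
$$
Multiplying by the common factor $\1_{\sigma+\mu>0}$ gives $\1_{\mathcal P_z}\1_{\sigma+\mu>0}$. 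Admissibility gives $t_z\ge0$, so $\sigma+\mu>\tau+\lambda\ge0$ on $\mathcal P_z$, and the factor equals $1$ there. Boundaries have Lebesgue measure zero and do not affect the Wiener integrals. Multiplying by $\frac12$ gives both formulas in \eqref{eq:mixed-integrals}.

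For \eqref{eq:mixed-remainder-representation}, subtract $F(u(t_0,x_0))$ times the second formula from the first. The one technical point is that $F(u(t_0,x_0))$ is a random constant. It is $\mathcal F_{t_0}$-measurable, and $\mathcal P_z$ lies in the time strip $q>t_0$. Hence $\1_{\mathcal P_z}F(u(t_0,x_0))$ is a predictable integrand, and the standard pull-out property of Walsh/Dalang-type integrals applies: for an $\mathcal F_{t_0}$-measurable bounded $\xi$,
$$
\int\1_{(t_0,\infty)}\,\xi\, g\,\dd W=\xi\int\1_{(t_0,\infty)}\,g\,\dd W.
$$

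This step is the main obstacle, because $F(u(t_0,x_0))$ is not bounded. I would first check the identity for simple $\xi$, then extend by truncating to $\xi_n=\xi\1_{|\xi|\le n}$. Using \eqref{eq:BDG}, the integrand $\1_{\mathcal P_z}\xi_n$ converges in the required norm: Lemma~\ref{lem:localized-energy} applies with $\gamma$ arbitrary and $M_1=0$, and $F(u)$ has finite moments by Theorem~\ref{thm:wellposedness}. Linearity then yields \eqref{eq:mixed-remainder-representation}.
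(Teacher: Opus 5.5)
Your proposal is correct and follows the paper's proof. Both arguments use the same three steps: the mixed increment of the d'Alembert term vanishes; the a.e.\ cone identity $\1_{\Delta(P_{12})}-\1_{\Delta(P_1)}-\1_{\Delta(P_2)}+\1_{\Delta(P_0)}=\1_{\mathcal P_z(h_1,h_2)}$ combined with linearity gives the two integral formulas; and the pull-out property for the $\F_{t_0}$-measurable factor $F(u(t_0,x_0))$ gives the remainder representation.

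You also supply detail the paper leaves implicit. You verify the cone identity explicitly in null coordinates, where the paper relies on a figure. You justify the pull-out for an unbounded factor by truncation, where the paper just invokes it after noting square integrability. One step there is compressed: passing from simple to bounded $\xi$ needs its own uniform approximation, which is routine and uses the same energy bound.
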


\begin{proof}
In characteristic coordinates, the d'Alembert term is
the sum of a function of $\tau$ and a function of $\lambda$,
so its mixed increment vanishes.

For $P=(t,x)$, write
$$
\Delta(P)=\{(q,y):0<q<t,\ |x-y|<t-q\}
$$
for its backward light cone.
As illustrated in Figure~\ref{fig:characteristic-cancellation},
the four backward cones satisfy
$$
\1_{\Delta(P_{12})}
-\1_{\Delta(P_1)}
-\1_{\Delta(P_2)}
+\1_{\Delta(P_0)}
=
\1_{\mathcal P_z(h_1,h_2)}
\qquad\text{a.e.}
$$
The corresponding stochastic integrands are admissible
by the mild formulation, as is their finite signed sum.
The first identity in~\eqref{eq:mixed-integrals} thus follows
by linearity of the stochastic integral.
The same argument gives the second identity.

Finally, $F(u(t_0,x_0))$ is $\F_{t_0}$-measurable, and
$\mathcal P_z(h_1,h_2)$ lies strictly after $t_0$.
Together with the moment bounds in
Theorem~\ref{thm:wellposedness}
and~\eqref{eq:indicator-norm}, this shows that
$F(u(t_0,x_0))\1_{\mathcal P_z(h_1,h_2)}$
is a square-integrable predictable $\Hh_H$-valued integrand.
The pull-out property of the stochastic integral then yields
\eqref{eq:mixed-remainder-representation}.
\end{proof}

\begin{figure}[!htbp]
\centering
{\begin{tikzpicture}[scale=0.68,>=Stealth,font=\small]
\draw[->] (-2.15,-0.65)--(-2.15,4.35) node[above] {$q$};
\draw[->] (-2.15,-0.65)--(3.35,-0.65) node[right] {$y$};
\draw[gray,densely dotted] (-2.15,0)--(0,0)--(0,-0.65);
\draw (-2.22,0)--(-2.08,0) node[left,xshift=-3pt] {$t_0$};
\draw (0,-0.72)--(0,-0.58);
\node[below] at (0,-0.72) {$x_0$};
\fill[blue!5] (0,0)--(-1.25,1.25)--(1.55,4.05)--(2.8,2.8)--cycle;
\draw[thick,blue] (0,0)--(-1.25,1.25)--(1.55,4.05)--(2.8,2.8)--cycle;
\draw[dashed] (-1.25,1.25)--(1.25,1.25)
node[midway,below] {$\rho=a_-$};
\draw[dashed] (0.3,2.8)--(2.8,2.8)
node[midway,above] {$\rho=a_+$};
\draw[thick] (-0.7,1.8)--(1.8,1.8) node[midway,above] {$I_{t_0+\rho}$};
\node[right,xshift=3pt] at (1.8,1.8) {$|I_{t_0+\rho}|=\ell(\rho)$};
\fill (0,0) circle(1.5pt) node[below,fill=white,inner sep=1pt] {$P_0$};
\fill (-1.25,1.25) circle(1.5pt) node[left] {$P_1$};
\fill (2.8,2.8) circle(1.5pt) node[right] {$P_2$};
\fill (1.55,4.05) circle(1.5pt) node[above] {$P_{12}$};
\begin{scope}[xshift=7.1cm,yshift=0.15cm]
\draw[->] (0,0)--(4.6,0) node[right] {$\rho$};
\draw[->] (0,0)--(0,2.9) node[above] {$\ell(\rho)$};
\draw[thick,blue] (0,0)--(1.25,2.5)--(2.8,2.5)--(4.05,0);
\draw[dashed] (1.25,0)--(1.25,2.5);
\draw[dashed] (2.8,0)--(2.8,2.5);
\node[below] at (1.25,0) {$a_-$};
\node[below] at (2.8,0) {$a_+$};
\node[below] at (4.05,0) {$a+b$};
\node[left] at (0,2.5) {$2a_-$};
\end{scope}
\end{tikzpicture}}
\caption{A characteristic parallelogram with $h_1<h_2$ and its spatial-section profile. }
\label{fig:parallelogram}
\end{figure}

\begin{figure}[!htbp]
\centering
\begin{tikzpicture}[scale=0.62,>=Stealth,font=\small]
% Plotting coordinates are (y-x_0,q), with t_0=1.5, a=1.25 and b=2.8.
\coordinate (C0) at (0,1.5);
\coordinate (C1) at (-1.25,2.75);
\coordinate (C2) at (2.8,4.3);
\coordinate (C12) at (1.55,5.55);
\draw[->] (-4.6,0)--(7.6,0) node[right] {$y$};
\draw[->] (-4.6,0)--(-4.6,5.9) node[above] {$q$};
\node[below left] at (-4.6,0) {$0$};
\draw (-4.67,1.5)--(-4.53,1.5) node[left,xshift=-3pt] {$t_0$};
\draw (0,-0.07)--(0,0.07);
\node[below] at (0,-0.07) {$x_0$};
% The four cones share characteristic edges, each drawn only once.
% Their bases at q=0 are [-4,7.1], [-4,1.5], [-1.5,7.1], [-1.5,1.5].
\draw[gray!60,dashed] (-4,0)--(C12)--(7.1,0);
\draw[gray!60,dashed] (-1.5,0)--(C2);
\draw[gray!60,dashed] (C1)--(1.5,0);
\fill[blue!7] (C0)--(C1)--(C12)--(C2)--cycle;
\draw[thick,blue] (C0)--(C1)--(C12)--(C2)--cycle;
\fill (C0) circle(1.5pt) node[left,xshift=-2pt] {$P_0$};
\fill (C1) circle(1.5pt) node[left] {$P_1$};
\fill (C2) circle(1.5pt) node[right] {$P_2$};
\fill (C12) circle(1.5pt) node[above] {$P_{12}$};
\node at (0.75,3.45) {$\mathcal P_z(h_1,h_2)$};
\node at (0,0.5) {$\Delta(P_0)$};
\node at (-2.4,1.5) {$\Delta(P_1)$};
\node at (3.4,2.1) {$\Delta(P_2)$};
\node[anchor=east] (cone-label) at (-1.1,4.55) {$\Delta(P_{12})$};
\draw[gray!60,thin] (cone-label.east)--(0.05,4.05);
\end{tikzpicture}
\caption{Characteristic cancellation for the mixed null-coordinate increment.  }
\label{fig:characteristic-cancellation}
\end{figure}

\subsection{{Exact Gaussian variance}}
  \begin{proposition}\label{prop:variance}
For every admissible positive increment $(h_1,h_2)$ at $z$,
the random variable $D_{h_1,h_2}V(z)$ is centered Gaussian
with variance
\begin{equation}\label{eq:variance-main}
\E|D_{h_1,h_2}V(z)|^2=\Psi_H(h_1,h_2).
\end{equation}
Consequently, for $1\leq p<\infty$,
$$
\norm{D_{h_1,h_2}V(z)}_{L^p(\Omega)}
=
m_p\Psi_H(h_1,h_2)^{1/2},
$$
where $m_p=(\E|N|^p)^{1/p}$ and $N$ is a standard normal
random variable. Moreover,
$$
\Psi_H(h_1,h_2)\asymp m^{2H}M,
\qquad m=h_1\wedge h_2,\quad M=h_1\vee h_2,
$$
with comparison constants depending only on $H$.
\end{proposition}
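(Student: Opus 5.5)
By Lemma~\ref{lem:geometry}, $D_{h_1,h_2}V(z)=\frac12\int\1_{\mathcal P_z(h_1,h_2)}(q,y)\,W(\dd q,\dd y)$. This is a Wiener integral of a deterministic element of $\Hh_T=L^2([0,T];\Hh_H)$. It is therefore centered Gaussian, and its variance is the squared $\Hh_T$-norm of $\frac12\1_{\mathcal P_z(h_1,h_2)}$. The plan is to compute this norm section by section in time.

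\textbf{Reduction to a one-dimensional integral.} At time $q=t_0+\rho$ the section of the parallelogram is the interval $I_{t_0+\rho}$ from~\eqref{eq:sections}. By the isometry and~\eqref{eq:indicator-norm},
$$
\E|D_{h_1,h_2}V(z)|^2=\frac14\int_0^{a+b}\norm{\1_{I_{t_0+\rho}}}_{\Hh_H}^2\dd\rho
=\frac14\int_0^{a+b}\ell(\rho)^{2H}\dd\rho .
$$

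\textbf{Evaluating the integral.} Insert the profile~\eqref{eq:section-profile}, with $a_-=m/\sqrt2$ and $a_+=M/\sqrt2$, and split the range into three pieces.
\begin{itemize}
\item On the two ramps $(0,a_-)$ and $(a_+,a+b)$ we have $\ell=2\rho$, respectively $\ell=2(a+b-\rho)$. Together they contribute $2\cdot 2^{2H}a_-^{2H+1}/(2H+1)$.
\item On the plateau $(a_-,a_+)$ we have $\ell=2a_-$, which contributes $(a_+-a_-)2^{2H}a_-^{2H}$.
\end{itemize}
Multiply by $\frac14$ and substitute $a_\pm$. Each term then carries the factor $2^{2H-2}\cdot 2^{-(2H+1)/2}=2^{H-5/2}$, and the result is
$$
2^{H-5/2}\Big[(M-m)m^{2H}+\frac{2m^{2H+1}}{2H+1}\Big]=\Psi_H(h_1,h_2).
$$
This exponent bookkeeping is the only place that needs care.

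\textbf{Moments and comparability.} For a centered Gaussian variable, $\norm{X}_{L^p}=m_p(\E X^2)^{1/2}$, which gives the $L^p$ identity. For $\Psi_H\asymp m^{2H}M$, note that $(M-m)m^{2H}+\frac{2}{2H+1}m^{2H+1}$ is at most $Mm^{2H}$, because $\frac2{2H+1}\le 2$ and the $m$ terms combine into $m^{2H}(M-m+2m)\le 2Mm^{2H}$. It is also at least $c_H\,Mm^{2H}$ with $c_H=\min\{1,\frac{2}{2H+1}\}$, since $(M-m)+\frac{2m}{2H+1}\ge c_H M$.

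There is no real obstacle. The only delicate points are justifying that the Wiener-integral variance equals $\frac14\int\norm{\1_{I_{t_0+\rho}}}^2_{\Hh_H}\dd\rho$, which follows from the isometry on $\Hh_T$, and tracking the constants $2^{H-5/2}$.
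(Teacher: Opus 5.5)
Your proposal is correct and follows the paper's proof essentially step for step. It obtains Gaussianity from the deterministic integrand in Lemma~\ref{lem:geometry}, uses the isometry with \eqref{eq:indicator-norm} to reduce the variance to $\frac14\int_0^{a+b}\ell(\rho)^{2H}\,\dd\rho$, splits the profile into ramps and plateau, uses Gaussian scaling for the $L^p$ identity, and finishes with an elementary comparison. One small slip: the bracket is not ``at most $Mm^{2H}$'', since for $H<1/2$ it equals $m^{2H}M\bigl[1+\frac{1-2H}{2H+1}\frac mM\bigr]\geq Mm^{2H}$; your own computation does correctly give the bound $2Mm^{2H}$, and the paper obtains the sharper upper constant $\frac{2}{2H+1}$.
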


\begin{proof} By Lemma~\ref{lem:geometry}, $D_{h_1,h_2}V(z)$ is a
stochastic integral with a deterministic integrand and is
therefore centered Gaussian. 

The stochastic isometry,
\eqref{eq:indicator-norm}, and the section profile
\eqref{eq:section-profile}, illustrated in the right panel of
Figure~\ref{fig:parallelogram}, give
$$
\begin{aligned}
\E|D_{h_1,h_2}V(z)|^2
&=
\frac14\int_{t_0}^{t_0+a+b}|I_q|^{2H}\dd q\\
&=
\frac14\left[
2\int_0^{a_-}(2\rho)^{2H}\dd\rho
+(a_+-a_-)(2a_-)^{2H}
\right]\\
&=
2^{H-5/2}\left[
(M-m)m^{2H}
+\frac{2m^{2H+1}}{2H+1}
\right]\\
&=
\Psi_H(h_1,h_2).
\end{aligned}
$$
The $L^p(\Omega)$ identity follows from Gaussian scaling.

Finally, since $0<m/M\leq1$ and $H<1/2$,
$$
1
\leq
\frac{\Psi_H(h_1,h_2)}
{2^{H-5/2}m^{2H}M}
=
1+\frac{1-2H}{2H+1}\frac mM
\leq
\frac{2}{2H+1},
$$
which proves the uniform comparison.
\end{proof}

\subsection{Proof of Theorem~\ref{thm:main}}

\begin{proof}[Proof of Theorem~\ref{thm:main}]
Fix $T>0$, $2\leq p<\infty$, and an admissible positive
increment $(h_1,h_2)$ at $z$, with
$$m=h_1\wedge h_2,\ \ \ \ M=h_1\vee h_2\leq1.$$
Let $P_0=(t_0,x_0)$,
and write $a=h_1/\sqrt2$ and $b=h_2/\sqrt2$. 
 
\noindent\textit{Step 1: Reduction to a localized energy estimate.}
Define
$$
R_{h_1,h_2}(z)
=
D_{h_1,h_2}v(z)
-
F(v(z))D_{h_1,h_2}V(z)
$$
and, for $q\in(t_0,t_0+a+b)$, set
$$
A_q(y)=F(u(q,y))-F(u(t_0,x_0)).
$$
By~\eqref{eq:mixed-remainder-representation},
$$
R_{h_1,h_2}(z)
=
\frac12
\int_{t_0}^{t_0+a+b}\int_\R
\1_{I_q}(y)A_q(y)\,W(\dd q,\dd y),
$$
where $I_q$ is the spatial section of
$\mathcal P_z(h_1,h_2)$ at time $q$.
The stochastic integral estimate~\eqref{eq:BDG} therefore gives
$$
\norm{R_{h_1,h_2}(z)}_{L^p(\Omega)}^2
\leq
C\int_{t_0}^{t_0+a+b}
\energyp{p}{\1_{I_q}A_q}\dd q.
$$
It remains to control the energy of the zero-extended
coefficient difference on each section.

\noindent\textit{Step 2: Localized fractional-energy bound.}
The characteristic geometry gives, for
$(q,y)\in\mathcal P_z(h_1,h_2)$,
$$
0<q-t_0<a+b,
\qquad
|y-x_0|\leq\max\{a,b\},
\qquad
|I_q|\leq\sqrt2m.
$$
In particular, $$|q-t_0|+|y-x_0|\leq3M/\sqrt2.$$
Proposition~\ref{prop:regularity} and the Lipschitz continuity
of $F$ consequently yield
$$
\sup_{y\in I_q}\norm{A_q(y)}_{L^p(\Omega)}
\leq
C\sup_{y\in I_q}
\big(|q-t_0|^H+|y-x_0|^H\big)
\leq CM^H.
$$
Moreover, the frozen value cancels in spatial differences, so
for $y,y'\in I_q$,
$$
\begin{aligned}
\norm{A_q(y)-A_q(y')}_{L^p(\Omega)}
&=
\norm{F(u(q,y))-F(u(q,y'))}_{L^p(\Omega)}\\
&\leq C|y-y'|^H.
\end{aligned}
$$
Since $H>1/4$, Lemma~\ref{lem:localized-energy} applies
with $\gamma=H$, $M_0\leq CM^H$, and $M_1\leq C$.
It follows that
$$
\begin{aligned}
\energyp{p}{\1_{I_q}A_q}
&\leq
C\left(
M^{2H}|I_q|^{2H}+|I_q|^{4H}
\right)\\
&\leq
CM^{2H}|I_q|^{2H},
\end{aligned}
$$
where the last inequality uses
$|I_q|\leq\sqrt2m\leq\sqrt2M$.

\noindent\textit{Step 3: Integration and conclusion.}
Combining Steps~1 and~2 with the variance calculation in
Proposition~\ref{prop:variance}, we obtain
$$
\begin{aligned}
\norm{R_{h_1,h_2}(z)}_{L^p(\Omega)}^2
&\leq
CM^{2H}
\int_{t_0}^{t_0+a+b}|I_q|^{2H}\dd q\\
&=
4CM^{2H}\Psi_H(h_1,h_2).
\end{aligned}
$$
Taking square roots proves~\eqref{eq:main}.
The constants are uniform in the base point and independent
of the aspect ratio.\end{proof}

\section{{Anisotropic quadratic variations and parameter estimation}}\label{sec:statistics}
 \subsection{A weighted Gaussian array estimate}

Fix $L_1,L_2>0$ and an observation rectangle
\begin{equation}\label{eq:observation-domain}
\mathcal D
=
[\tau_*,\tau_*+L_1]\times[\lambda_*,\lambda_*+L_2],
\end{equation}
where
$$
t_*:=\frac{\tau_*+\lambda_*}{\sqrt2}>0,
\qquad
t_*+\frac{L_1+L_2}{\sqrt2}\leq T.
$$
For positive integers $N_1$ and $N_2$, set
$$
h_i=\frac{L_i}{N_i},
\qquad
m=h_1\wedge h_2,
\qquad
M=h_1\vee h_2.
$$
For
$$
z_{ij}
=
(\tau_*+ih_1,\lambda_*+jh_2),
\qquad
0\leq i<N_1,\quad 0\leq j<N_2,
$$
write
$$
t_{ij}
=
t_*+\frac{ih_1+jh_2}{\sqrt2},
\qquad
x_{ij}
=
\frac{\lambda_*+jh_2-\tau_*-ih_1}{\sqrt2}
$$
for the corresponding physical coordinates, and define
$$
Y_{ij}=D_{h_1,h_2}V(z_{ij}),
\qquad
\sigma_h^2=\Psi_H(h_1,h_2).
$$

For a cell $c=(i,j)$, we use the shorthand
$$
z_c=z_{ij},\qquad
t_c=t_{ij},\qquad
x_c=x_{ij},\qquad
Y_c=Y_{ij}.
$$

By Lemma~\ref{lem:geometry}, $Y_{ij}$ is a centered Gaussian
stochastic integral with deterministic kernel
$$
\frac12\1_{\mathcal P_{z_{ij}}(h_1,h_2)},
$$
whose time support is $(t_{ij},t_{ij}+\delta)$, where
$$
\delta=\frac{h_1+h_2}{\sqrt2}.
$$
To separate the random weights from the Gaussian increments,
define
$$
b_k=t_*+k\delta,
\qquad
\mathcal I_k
=
\{(i,j):b_k\leq t_{ij}<b_{k+1}\},
\qquad
k\geq0.
$$
If $c=(i,j)\in\mathcal I_k$, then the time support of $Y_c$
is contained in $(b_k,b_{k+2})$. Hence, by temporal whiteness,
the Gaussian vector $(Y_c:c\in\mathcal I_k)$ is independent of
$\F_{b_k}$ and is $\F_{b_{k+2}}$-measurable.

For a cell $c$, let $I_c(q)$ denote the spatial section of
$\mathcal P_{z_c}(h_1,h_2)$ at time $q$, with
$I_c(q)=\varnothing$ outside its time support.
By Proposition~\ref{prop:variance},
$$
\E Y_c^2=\sigma_h^2.
$$

\begin{proposition}[Weighted Gaussian array estimate]
\label{prop:weighted-array}
Let $p\geq2$, and let $w_c\geq0$ be such that
$w_c$ is $\F_{b_k}$-measurable whenever $c\in\mathcal I_k$ and
$$
\sup_c\norm{w_c}_{L^p(\Omega)}<\infty.
$$
Then,
\begin{equation}\label{eq:weighted-array}
\left\|
\frac{h_1h_2}{\sigma_h^2}
\sum_c
w_c\big(Y_c^2-\sigma_h^2\big)
\right\|_{L^p(\Omega)}
\leq
C_p|\mathcal D|^{1/2}
\left(
\sup_c\norm{w_c}_{L^p(\Omega)}
\right)
\sqrt{h_1h_2},
\end{equation}
where the sum runs over all $N_1N_2$ grid cells.
In particular, the constant is independent of the aspect ratio
$h_1/h_2$.
\end{proposition}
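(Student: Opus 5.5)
We would prove the weighted Gaussian array estimate by splitting the sum along the time-blocks $\mathcal I_k$ and running a martingale argument across blocks, plus a covariance bound inside blocks. Set $\xi_c=Y_c^2-\sigma_h^2$ and $S_k=\sum_{c\in\mathcal I_k}w_c\xi_c$. Since $(Y_c:c\in\mathcal I_k)$ is independent of $\F_{b_k}$ and centered in second moments, while $w_c$ is $\F_{b_k}$-measurable, we get $\E[S_k\mid\F_{b_k}]=0$, and $S_k$ is $\F_{b_{k+2}}$-measurable. Therefore the even-indexed $S_{2k}$ form a martingale difference sequence with respect to $(\F_{b_{2k+2}})$, and likewise the odd-indexed ones. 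Applying Burkholder's inequality to each subsequence, followed by Minkowski in $L^{p/2}$, gives
$$
\Big\|\sum_k S_k\Big\|_{L^p(\Omega)}^2\le C_p\sum_k\|S_k\|_{L^p(\Omega)}^2 .
$$

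For a single block, we condition on $\F_{b_k}$. Given the weights, $S_k$ is a second Wiener chaos element in the independent Gaussian vector $(Y_c)$. Hypercontractivity, or simply equivalence of moments in a fixed chaos, gives
$$
\E\big[|S_k|^p\mid\F_{b_k}\big]\le C_p\Big(\sum_{c,c'\in\mathcal I_k}w_cw_{c'}\,2\,\E[Y_cY_{c'}]^2\Big)^{p/2}.
$$
Bound $\E[Y_cY_{c'}]^2\le\sigma_h^2|\E Y_cY_{c'}|$ and use $w_cw_{c'}\le(w_c^2+w_{c'}^2)/2$. This reduces the problem to the \emph{row bound}
$$
\sup_c\sum_{c'}|\E[Y_cY_{c'}]|\le C\sigma_h^2,
$$
with $C$ depending only on $H$. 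Granting it, $\|S_k\|_{L^p}^2\le C_p\sigma_h^4\sum_{c\in\mathcal I_k}\|w_c\|_{L^p}^2$. Summing over $k$ gives $\|\sum_c w_c\xi_c\|_{L^p}^2\le C_p\sigma_h^4N_1N_2\sup_c\|w_c\|_{L^p}^2$. Multiplying by $(h_1h_2/\sigma_h^2)^2$ and using $N_1N_2h_1h_2=|\mathcal D|$ yields exactly \eqref{eq:weighted-array}.

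The main obstacle is this row bound uniformly in the aspect ratio, and here the rough-regime negative correlation is the tool. The key observation is that the parallelograms $\mathcal P_{z_c}$ for distinct cells are disjoint and tile a larger parallelogram. By the isometry, $\E[Y_cY_{c'}]=\frac14\int\langle\1_{I_c(q)},\1_{I_{c'}(q)}\rangle_{\Hh_H}\dd q$. For disjoint intervals, \eqref{eq:spatial-inner} shows the integrand is $-2a_H\int_{I_c}\int_{I_{c'}}|y-z|^{2H-2}\le0$. Hence $\E[Y_cY_{c'}]\le0$ for $c'\ne c$. Moreover, for any union $\mathcal U$ of cells, the tiling identity $\sum_{c'}Y_{c'}=\frac12\int_{\mathcal U}W$ gives
$$
\sum_{c'\ne c}|\E Y_cY_{c'}|=\sigma_h^2-\E\Big[Y_c\sum_{c'}Y_{c'}\Big].
$$
We take $\mathcal U$ to be the full grid parallelogram. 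Then $\E[Y_c\sum_{c'}Y_{c'}]=\frac14\int\langle\1_{I_c(q)},\1_{J(q)}\rangle_{\Hh_H}\dd q$ with $I_c(q)\subset J(q)$. The pointwise inner product $\langle\1_I,\1_J\rangle$ for nested intervals equals $\frac12(|J|^{2H}+|I|^{2H}-|J_-|^{2H}-|J_+|^{2H})$, where $J_\pm$ are the leftover pieces; by subadditivity of $x\mapsto x^{2H}$ this is at least $-|I|^{2H}$ in absolute terms. Thus $|\E[Y_c\sum Y_{c'}]|\le\frac14\int|I_c(q)|^{2H}\dd q=\sigma_h^2$, and the row sum is at most $2\sigma_h^2$. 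This bound is independent of $h_1/h_2$, which closes the argument. The remaining care is the measurability bookkeeping for the two interleaved martingales and checking the nested-interval inequality.
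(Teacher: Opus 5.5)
The structure of your proof is the paper's: a block martingale split by parity and handled with Burkholder, conditional hypercontractivity on each block, and an $\ell^1$ row bound built from $\E[Y_cY_{c'}]\le0$ for $c'\neq c$ plus the tiling identity. Your combination of $\E[Y_cY_{c'}]^2\le\sigma_h^2|\E[Y_cY_{c'}]|$ with $w_cw_{c'}\le\frac12(w_c^2+w_{c'}^2)$ is an equivalent substitute for the paper's H\"older step. The one step that fails as written is the nested-interval computation, and that step is what delivers uniformity in the aspect ratio.

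Your expression $\frac12(|J|^{2H}+|I|^{2H}-|J_-|^{2H}-|J_+|^{2H})$ is what polarizing $\norm{\1_J-\1_I}_{\Hh_H}^2$ gives only when $\1_{J_-}\perp\1_{J_+}$. For $H<1/2$ the true inner product equals your expression minus $\langle\1_{J_-},\1_{J_+}\rangle_{\Hh_H}$. That correction is strictly positive, because disjoint intervals are negatively correlated, which is the same fact you use for the off-diagonal terms.

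Separately, subadditivity of $x\mapsto x^{2H}$ only gives the upper bound $\langle\1_I,\1_J\rangle_{\Hh_H}\le|I|^{2H}$. Your identity $\sum_{c'\neq c}|\E[Y_cY_{c'}]|=\sigma_h^2-\E[Y_c\sum_{c'}Y_{c'}]$ needs a \emph{lower} bound on $\E[Y_c\sum_{c'}Y_{c'}]$. For your formula that lower bound is false. Take $I=(1,1+\varepsilon)$ and $J=(0,2+\varepsilon)$: your expression tends to $\frac12(2^{2H}-2)<0$ as $\varepsilon\downarrow0$, while $-|I|^{2H}\to0$. A short section lying deep inside $J(q)$ is exactly the case for interior cells. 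So, taken at face value, your computation would not give a row bound of order $\sigma_h^2$, and the error is not cosmetic.

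The repair is the explicit covariance formula. For $A\le a<b\le B$,
\[
\langle\1_{(a,b)},\1_{(A,B)}\rangle_{\Hh_H}=\tfrac12\big[(b-A)^{2H}-(a-A)^{2H}+(B-a)^{2H}-(B-b)^{2H}\big].
\]
This is $\ge0$ by monotonicity of $x\mapsto x^{2H}$, and $\le|I|^{2H}$ by subadditivity, so your two-sided bound is true, just not for the reason you gave. Integrating over $q$ gives $\sum_{c'}\E[Y_cY_{c'}]\ge0$, hence $\sum_{c'}|\E[Y_cY_{c'}]|\le2\sigma_h^2$, which is the paper's argument. With this correction the rest of your proof goes through as written.
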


\begin{proof}
We first establish the covariance bound
\begin{equation}\label{eq:covariance-row-bound}
\sup_c\sum_d| \E[Y_cY_d]|^2\leq2\sigma_h^4,
\end{equation}
where the sum is over all grid cells.

Let
$$ \mathcal C_{ij}
=
(\tau_*+ih_1,\tau_*+(i+1)h_1)
\times
(\lambda_*+jh_2,\lambda_*+(j+1)h_2). 
$$
be the null-coordinate cell associated with $c=(i,j)$.
The cells $\mathcal C_{ij}$ form a partition of $\mathcal D$
up to their boundaries. Under the linear change of variables
$$
(q,y)
=
\left(
\frac{\tau+\lambda}{\sqrt2},
\frac{\lambda-\tau}{\sqrt2}
\right),
$$
their images are precisely the characteristic parallelograms
associated with the increments $Y_c$. Hence these parallelograms
have pairwise disjoint interiors.

For $c\neq d$, their spatial sections $I_c(q)$ and $I_d(q)$ are
therefore disjoint up to endpoints for almost every $q$. For two
disjoint intervals $I,J\subset\R$, the definition of
$\mathcal H_H$ gives
$$
\left\langle\1_I,\1_J\right\rangle_{\Hh_H}
=
-2a_H
\int_I\int_J
|y-z|^{2H-2}\dd y\dd z
\leq0,
$$
because $a_H=H(1/2-H)>0$. By the stochastic isometry,
$$
\E[Y_cY_d]
=
\frac14
\int_0^T
\left\langle
\1_{I_c(q)},\1_{I_d(q)}
\right\rangle_{\Hh_H}
\dd q,
$$
and consequently
\begin{equation*} 
\E[Y_cY_d]\leq0,
\qquad c\neq d.
\end{equation*}

Next, let $\mathcal P_{\mathcal D}$ denote the image of
$\mathcal D$ under the above linear transformation, and let
$J(q)$ be its spatial section at time $q$. Since the grid cells
partition $\mathcal D$,
$$
\sum_d\1_{I_d(q)}
=
\1_{J(q)}
\qquad\text{a.e.}
$$
Moreover, whenever $I_c(q)$ is nonempty,
$I_c(q)\subseteq J(q)$.

If $I=(a,b)\subseteq J=(A,B)$, then the covariance identity
associated with~\eqref{eq:indicator-norm} gives 
$$
\begin{aligned}
\left\langle\1_I,\1_J\right\rangle_{\Hh_H}
=
\frac12\Big[
&(b-A)^{2H}-(a-A)^{2H}\\
&+(B-a)^{2H}-(B-b)^{2H}
\Big]
\geq0.
\end{aligned}
$$
Therefore,
$$
\begin{aligned}
\sum_d \E[Y_cY_d]
&=
\frac14
\int_0^T
\left\langle
\1_{I_c(q)},
\sum_d\1_{I_d(q)}
\right\rangle_{\Hh_H}
\dd q\\
&=
\frac14
\int_0^T
\left\langle
\1_{I_c(q)},\1_{J(q)}
\right\rangle_{\Hh_H}
\dd q
\geq0.
\end{aligned}
$$

Using 
Proposition~\ref{prop:variance} and 
$\E[Y_cY_d]\leq0$ for $c\neq d$, we have
$$
\begin{aligned}
\sum_d|\E[Y_cY_d]|
&=
\E\left[Y_c^2\right]-\sum_{d\neq c}\E[Y_cY_d]\\
&=
2\sigma_h^2-\sum_d\E[Y_cY_d]
\leq
2\sigma_h^2.
\end{aligned}
$$
By the Cauchy--Schwarz  inequality,
$$
|\E[Y_cY_d]|
\leq
\big(\E Y_c^2\,\E Y_d^2\big)^{1/2}
=
\sigma_h^2.
$$
Hence
$$
\sum_d|\E[Y_cY_d]|^2
\leq
\sigma_h^2\sum_d|\E[Y_cY_d]|
\leq
2\sigma_h^4,
$$
which proves~\eqref{eq:covariance-row-bound}.

For each block, set
$$
A_k
=
\sum_{c\in\mathcal I_k}
w_c\big(Y_c^2-\sigma_h^2\big).
$$ 

Conditional on $\F_{b_k}$, the coefficients $w_c$ are
deterministic and $A_k$ belongs to the second Gaussian chaos
generated by $(Y_c:c\in\mathcal I_k)$. Moreover,
$$
\E[A_k^2\mid\F_{b_k}]
=
2\sum_{c,d\in\mathcal I_k}
w_cw_d\, \E[Y_cY_d] ^2.
$$
Hence Gaussian hypercontractivity yields
$$
\left(
\E[|A_k|^p\mid\F_{b_k}]
\right)^{2/p}
\leq
C_p
\sum_{c,d\in\mathcal I_k}
w_cw_d\, \E[Y_cY_d]^2.
$$
Taking $L^{p/2}(\Omega)$ norms and using H\"older's inequality
together with~\eqref{eq:covariance-row-bound}, we obtain
$$
\begin{aligned}
\norm{A_k}_{L^p(\Omega)}^2
&\leq
C_p
\sum_{c,d\in\mathcal I_k}
\norm{w_cw_d}_{L^{p/2}(\Omega)}
\,\E[Y_cY_d]^2\\
&\leq
C_p
\left(
\sup_c\norm{w_c}_{L^p(\Omega)}
\right)^2
|\mathcal I_k|\sigma_h^4.
\end{aligned}
$$

By the block construction,
$A_k$ is $\F_{b_{k+2}}$-measurable and
$$
\E[A_k\mid\F_{b_k}]=0.
$$
Thus, for each $\varepsilon\in\{0,1\}$,
$(A_{2n+\varepsilon})_{n\geq0}$ is a martingale-difference
sequence with respect to
$$
\big(
\F_{b_{2n+\varepsilon+2}}
\big)_{n\geq-1}.
$$
Applying the discrete Burkholder inequality to the two parity
subsequences and then Minkowski's inequality gives
$$
\begin{aligned}
\norm{\sum_kA_k}_{L^p(\Omega)}
&\leq
C_p
\left(
\sum_k\norm{A_k}_{L^p(\Omega)}^2
\right)^{1/2}\\
&\leq
C_p
\left(
\sup_c\norm{w_c}_{L^p(\Omega)}
\right)
\sigma_h^2
\left(
\sum_k|\mathcal I_k|
\right)^{1/2}\\
&=
C_p
\left(
\sup_c\norm{w_c}_{L^p(\Omega)}
\right)
\sigma_h^2\sqrt{N_1N_2}.
\end{aligned}
$$
Since
$$
N_1N_2=\frac{|\mathcal D|}{h_1h_2},
$$
multiplication by $h_1h_2/\sigma_h^2$ proves
\eqref{eq:weighted-array}.
\end{proof}

 \subsection{Quadratic variation of the nonlinear solution}

Define the rectangular quadratic-variation statistic
\begin{equation}\label{eq:Q-definition}
Q_{N_1,N_2}(v)
=
\frac{h_1h_2}{\Psi_H(h_1,h_2)}
\sum_{i=0}^{N_1-1}\sum_{j=0}^{N_2-1}
\big[D_{h_1,h_2}v(z_{ij})\big]^2.
\end{equation}
We also introduce the Riemann sum
\begin{equation}\label{eq:S-definition}
S_{N_1,N_2}(v)
=
h_1h_2
\sum_{i=0}^{N_1-1}\sum_{j=0}^{N_2-1}
F^2(v(z_{ij}))
\end{equation}
and its limiting integral
$$
\mathcal A
=
\int_{\mathcal D}
F^2(v(\tau,\lambda))
\dd\tau\dd\lambda.
$$
   \begin{theorem}\label{thm:quadratic-variation}
Assume Condition~\ref{cond:main} and let $\mathcal D$ satisfy
\eqref{eq:observation-domain}. Then, for every
$1\leq p<\infty$ and $M\leq1$,
\begin{equation}\label{eq:quadratic-limit}
\norm{Q_{N_1,N_2}(v)-\mathcal A}_{L^p(\Omega)}
+
\norm{S_{N_1,N_2}(v)-\mathcal A}_{L^p(\Omega)}
\leq
C M^H,
\end{equation}
where $C=C(T,p,H,F,I_0,\mathcal D)$ is independent of
$N_1$, $N_2$, and the aspect ratio $h_1/h_2$.
Consequently, as $\min(N_1,N_2)\to\infty$,
$$
Q_{N_1,N_2}(v)\longrightarrow\mathcal A,
\qquad
S_{N_1,N_2}(v)\longrightarrow\mathcal A
$$
in $L^p(\Omega)$, and hence in probability.
\end{theorem}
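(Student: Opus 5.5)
We decompose $Q_{N_1,N_2}(v)-\mathcal A$ into three pieces and bound each in $L^p(\Omega)$. It suffices to treat $p\geq2$; the cases $1\leq p<2$ follow by monotonicity. Write $R_c=D_{h_1,h_2}v(z_c)-F(v(z_c))Y_c$ for the remainder of Theorem~\ref{thm:main}, so that
$$
[D_{h_1,h_2}v(z_c)]^2=F^2(v(z_c))Y_c^2+2F(v(z_c))Y_cR_c+R_c^2 .
$$
This gives
$$
Q_{N_1,N_2}(v)-\mathcal A
=\frac{h_1h_2}{\sigma_h^2}\sum_c F^2(v(z_c))(Y_c^2-\sigma_h^2)
+\frac{h_1h_2}{\sigma_h^2}\sum_c\big(2F(v(z_c))Y_cR_c+R_c^2\big)
+\big(S_{N_1,N_2}(v)-\mathcal A\big).
$$
We estimate the three terms separately. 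The statement for $S$ is then the third term on its own, and the full bound \eqref{eq:quadratic-limit} follows from the triangle inequality.

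\emph{The Gaussian fluctuation term.} The difficulty here is that $w_c=F^2(v(z_c))$ is $\F_{t_c}$-measurable, not $\F_{b_k}$-measurable, since $t_c\geq b_k$. To fix this, we replace $w_c$ by $\widetilde w_c=F^2(u(b_k,x_c))$ for $c\in\mathcal I_k$, which is $\F_{b_k}$-measurable. Its $L^p$ norm is bounded uniformly, because $F$ grows linearly and Theorem~\ref{thm:wellposedness} controls the moments of $u$. Proposition~\ref{prop:weighted-array} then bounds the modified sum by $C\sqrt{h_1h_2}\leq CM$.

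For the correction $w_c-\widetilde w_c$, observe that $F^2$ is locally Lipschitz with linear growth of its derivative, and that $|t_c-b_k|<\delta\leq\sqrt2M$. Proposition~\ref{prop:regularity} together with H\"older's inequality therefore gives $\norm{w_c-\widetilde w_c}_{L^{2p}}\leq CM^H$. We bound this correction crudely by the triangle inequality:
$$
\frac{h_1h_2}{\sigma_h^2}\sum_c\norm{w_c-\widetilde w_c}_{L^{2p}}\norm{Y_c^2-\sigma_h^2}_{L^{2p}}
\leq \frac{h_1h_2}{\sigma_h^2}\,N_1N_2\,CM^H\sigma_h^2=C|\mathcal D|M^H .
$$

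\emph{The remainder term.} Theorem~\ref{thm:main} gives $\norm{R_c}_{L^{2p}}\leq CM^H\sigma_h$, and Proposition~\ref{prop:variance} gives $\norm{Y_c}_{L^{4p}}=m_{4p}\sigma_h$. Combining these with the moment bound on $F(v)$ through H\"older's inequality, each summand has $L^p$ norm at most $C M^H\sigma_h^2$; the $R_c^2$ term is even smaller, at most $CM^{2H}\sigma_h^2$. Multiplying by $h_1h_2/\sigma_h^2$ and summing over the $N_1N_2=|\mathcal D|/(h_1h_2)$ cells yields $CM^H$. None of these constants depends on the aspect ratio, because Theorem~\ref{thm:main} is uniform in it.

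\emph{The Riemann-sum term.} We write
$$
S_{N_1,N_2}(v)-\mathcal A=\sum_c\int_{\mathcal C_c}\big[F^2(v(z_c))-F^2(v(\tau,\lambda))\big]\dd\tau\dd\lambda .
$$
For $(\tau,\lambda)\in\mathcal C_c$, the physical points corresponding to $(\tau,\lambda)$ and to $z_c$ differ by at most $CM$ in both time and space. Proposition~\ref{prop:regularity} together with the local Lipschitz bound on $F^2$ therefore bounds the integrand in $L^p$ by $CM^H$. Applying Minkowski's inequality and summing over cells gives $C|\mathcal D|M^H$.

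The main obstacle is the adaptedness mismatch in the first term, which the shift to the block times $b_k$ resolves. We must also verify that $u(b_k,\cdot)$ is evaluated inside $[0,T]$. This holds because $b_k\geq t_*>0$ and $b_k\leq t_c\leq T$.
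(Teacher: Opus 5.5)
Your proposal is correct and follows essentially the same route as the paper. It uses the same three-term decomposition: your remainder term $\sum_c\big(2F(v(z_c))Y_cR_c+R_c^2\big)$ is exactly the paper's $E_1$ written out via the difference of squares. It also refreezes the weights at the block times $b_k$ so that Proposition~\ref{prop:weighted-array} applies, bounds the correction crudely by H\"older, and handles the Riemann sum with the same Minkowski estimate.

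The only omissions are routine. Each increment at $z_c$ is admissible because $t_*+(L_1+L_2)/\sqrt2\leq T$. The final convergence claim follows from $M\leq\max\{L_1,L_2\}/\min\{N_1,N_2\}\to0$.
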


 \begin{proof}
It suffices to prove~\eqref{eq:quadratic-limit} for $p\geq2$.
For each cell $c$, set
$$
f_c=F(v(z_c)),
\qquad
R_c=D_{h_1,h_2}v(z_c)-f_cY_c.
$$ 
Decompose
$$
\begin{aligned}
Q_{N_1,N_2}(v)-\mathcal A
={}&
\frac{h_1h_2}{\sigma_h^2}
\sum_c
\Big[
\big(D_{h_1,h_2}v(z_c)\big)^2-f_c^2Y_c^2
\Big]\\
&+
\frac{h_1h_2}{\sigma_h^2}
\sum_c
f_c^2\big(Y_c^2-\sigma_h^2\big)
+
S_{N_1,N_2}(v)-\mathcal A\\
={}&E_1+E_2+E_3.
\end{aligned}
$$

\noindent\textit{(1) Local-linearization term $E_1$.}
By Theorem~\ref{thm:main}, H\"older's inequality, and the
uniform moment bounds,
$$
\norm{R_c}_{L^{2p}(\Omega)}
\leq
CM^H\sigma_h,
\qquad
\norm{f_cY_c}_{L^{2p}(\Omega)}
\leq
C\sigma_h.
$$
Since
$$
D_{h_1,h_2}v(z_c)=f_cY_c+R_c,
$$
the difference-of-squares identity and $M\leq1$ give
$$
\begin{aligned}
 \norm{
\big(D_{h_1,h_2}v(z_c)\big)^2-f_c^2Y_c^2
}_{L^p(\Omega)} 
& \leq
\norm{R_c}_{L^{2p}(\Omega)}
\left(
2\norm{f_cY_c}_{L^{2p}(\Omega)}
+\norm{R_c}_{L^{2p}(\Omega)}
\right)\\
& \leq
CM^H\sigma_h^2.
\end{aligned}
$$
Therefore,
$$
\norm{E_1}_{L^p(\Omega)}
\leq
\frac{h_1h_2}{\sigma_h^2}
\sum_c CM^H\sigma_h^2
\leq
CM^H.
$$

\noindent\textit{(2) Weighted Gaussian term $E_2$.}
The weight $f_c^2=F^2(v(z_c))$ is generally only
$\F_{t_c}$-measurable, whereas
Proposition~\ref{prop:weighted-array} requires
$\F_{b_k}$-measurable weights on each block $\mathcal I_k$.
We therefore freeze the coefficient once more at the block
time $b_k$.

For $c\in\mathcal I_k$, set
$$
\bar f_c=F(u(b_k,x_c)).
$$
Then $\bar f_c^2$ is $\F_{b_k}$-measurable, and
$$
\begin{aligned}
E_2
= 
\frac{h_1h_2}{\sigma_h^2}
\sum_c
(f_c^2-\bar f_c^2)(Y_c^2-\sigma_h^2) 
 +
\frac{h_1h_2}{\sigma_h^2}
\sum_c
\bar f_c^2(Y_c^2-\sigma_h^2).
\end{aligned}
$$
Since $0\leq t_c-b_k<\delta\leq\sqrt2M$,
Proposition~\ref{prop:regularity}, the uniform moment bounds,
and Proposition~\ref{prop:variance} imply
$$
\norm{f_c^2-\bar f_c^2}_{L^{2p}(\Omega)}
\leq
CM^H,
$$
$$
\norm{Y_c^2-\sigma_h^2}_{L^{2p}(\Omega)}
\leq
C_p\sigma_h^2,
\qquad
\sup_c\norm{\bar f_c^2}_{L^p(\Omega)}
\leq
C.
$$
Hence H\"older's inequality gives
$$
\left\|
\frac{h_1h_2}{\sigma_h^2}
\sum_c
(f_c^2-\bar f_c^2)(Y_c^2-\sigma_h^2)
\right\|_{L^p(\Omega)}
\leq
CM^H,
$$
while Proposition~\ref{prop:weighted-array}, applied with
$w_c=\bar f_c^2$, yields
$$
\left\|
\frac{h_1h_2}{\sigma_h^2}
\sum_c
\bar f_c^2(Y_c^2-\sigma_h^2)
\right\|_{L^p(\Omega)}
\leq
C\sqrt{h_1h_2}.
$$
Consequently,
$$
\norm{E_2}_{L^p(\Omega)}
\leq
C\big(M^H+\sqrt{h_1h_2}\big)
\leq
CM^H,
$$
because $\sqrt{h_1h_2}\leq M\leq M^H$ for $M\leq1$.

\noindent\textit{(3) Riemann-sum term $E_3$.}
For $z$ in the cell based at $z_c$, the corresponding physical
coordinates differ by at most $CM$. Hence
Proposition~\ref{prop:regularity}, the Lipschitz continuity of
$F$, and the uniform moment bounds give
$$
\sup_{z\in\mathrm{cell}\ c}
\norm{F^2(v(z))-f_c^2}_{L^p(\Omega)}
\leq
CM^H.
$$
Since $E_3=S_{N_1,N_2}(v)-\mathcal A$, Minkowski's inequality
yields
$$
\begin{aligned}
\norm{E_3}_{L^p(\Omega)}
&\leq
\sum_c
\int_{\mathrm{cell}\ c}
\norm{F^2(v(z))-f_c^2}_{L^p(\Omega)}
\dd z\\
&\leq
CM^H.
\end{aligned}
$$

   The estimate for $E_3$ proves the asserted bound for
$S_{N_1,N_2}(v)-\mathcal A$. Combining the estimates for
$E_1$, $E_2$, and $E_3$ proves~\eqref{eq:quadratic-limit}
for $p\geq2$. The case $1\leq p<2$ follows from monotonicity
of $L^p(\Omega)$ norms.

Since
$$
M
\leq
\frac{\max\{L_1,L_2\}}{\min\{N_1,N_2\}},
$$
we have $M\to0$ whenever $\min(N_1,N_2)\to\infty$. 
   
      The proof is complete. 
\end{proof}

\subsection{A diffusion-multiplier estimator}

We now apply the quadratic-variation limit to estimate a
multiplicative diffusion parameter. Consider the parametric family
$$
\partial_t^2u_\theta(t,x)
=
\partial_x^2u_\theta(t,x)
+
\theta F(u_\theta(t,x))\dot W(t,x),
\qquad
\theta>0,
$$
with the same initial data as in~\eqref{eq:SWE}, and let
$v_\theta$ denote the corresponding solution in null coordinates.
Throughout this subsection, the Hurst parameter $H$ and the
function $F$ are assumed to be known, whereas $\theta$ is the
unknown parameter to be estimated from discrete observations of
$v_\theta$ on $\mathcal D$.

Let $Q_{N_1,N_2}(v_\theta)$ and
$S_{N_1,N_2}(v_\theta)$ be defined by
\eqref{eq:Q-definition} and~\eqref{eq:S-definition},
respectively, with $v$ replaced by $v_\theta$, and set
$$
\mathcal A_\theta
=
\int_{\mathcal D}F^2(v_\theta(z))\dd z.
$$

For each fixed $\theta>0$, the equation with diffusion
coefficient $\theta F$ satisfies Condition~\ref{cond:main}.
Hence, as $\min(N_1,N_2)\to\infty$,
Theorem~\ref{thm:quadratic-variation}, applied with
$\theta F$ in place of $F$, gives, for every $1\leq p<\infty$,
\begin{equation}\label{eq:theta-Q-limit}
Q_{N_1,N_2}(v_\theta)
\longrightarrow
\theta^2\mathcal A_\theta
\qquad\text{in }L^p(\Omega),
\end{equation}
 The corresponding Riemann sum in
Theorem~\ref{thm:quadratic-variation} is
$$
h_1h_2
\sum_{i=0}^{N_1-1}\sum_{j=0}^{N_2-1}
\theta^2F^2(v_\theta(z_{ij}))
=
\theta^2S_{N_1,N_2}(v_\theta).
$$
Hence, 
\begin{equation}\label{eq:theta-S-limit}
S_{N_1,N_2}(v_\theta)
\longrightarrow
\mathcal A_\theta
\qquad\text{in }L^p(\Omega).
\end{equation}
In particular, both convergences hold in probability.

 To ensure identifiability, we impose the nondegeneracy condition
in the following proposition.  It rules out the degenerate case where $F(v_\theta)$ vanishes
identically on the observation domain, in which case the
diffusion parameter cannot be identified from the observations.
\begin{proposition}[Consistency]\label{prop:estimator}
Assume in addition that
$\mathcal A_\theta>0$ almost surely.

Define
$$
\widehat\theta_{N_1,N_2}
=
\begin{cases}
\displaystyle
\left(
\frac{Q_{N_1,N_2}(v_\theta)}
     {S_{N_1,N_2}(v_\theta)}
\right)^{1/2},
&
S_{N_1,N_2}(v_\theta)>0,\\[8pt]
0,
&
S_{N_1,N_2}(v_\theta)=0.
\end{cases}
$$
Then
$$
\widehat\theta_{N_1,N_2}
\xrightarrow{\mathbb P} \theta, \ \ \ \text{as } 
 \min(N_1,N_2)\to\infty.
$$ 
\end{proposition}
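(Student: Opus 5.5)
The proof combines the two convergences \eqref{eq:theta-Q-limit} and \eqref{eq:theta-S-limit} through the continuous mapping theorem. The only delicate points are the division by $S_{N_1,N_2}(v_\theta)$ and the square root.

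First, I record that $Q_{N_1,N_2}(v_\theta)\to\theta^2\mathcal A_\theta$ and $S_{N_1,N_2}(v_\theta)\to\mathcal A_\theta$ in probability as $\min(N_1,N_2)\to\infty$. Both limits are built from the same solution $v_\theta$ on the same probability space, so the pair converges jointly in probability:
$$
\big(Q_{N_1,N_2}(v_\theta),S_{N_1,N_2}(v_\theta)\big)\xrightarrow{\mathbb P}(\theta^2\mathcal A_\theta,\mathcal A_\theta).
$$
This holds because convergence in probability of each coordinate implies convergence in probability of the vector.

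Next, I consider the map $g:\R\times\R\to[0,\infty)$ defined by $g(x,y)=(x_+/y)^{1/2}$ for $y>0$ and $g(x,y)=0$ for $y\le0$. Since $Q_{N_1,N_2}(v_\theta)\ge0$ and $S_{N_1,N_2}(v_\theta)\ge0$, we have $\widehat\theta_{N_1,N_2}=g(Q_{N_1,N_2}(v_\theta),S_{N_1,N_2}(v_\theta))$. The map $g$ is continuous on the open set $\R\times(0,\infty)$. By the nondegeneracy assumption $\mathcal A_\theta>0$ a.s., the limit point $(\theta^2\mathcal A_\theta,\mathcal A_\theta)$ lies in this continuity set almost surely. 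The continuous mapping theorem for convergence in probability, which only requires continuity at almost every realization of the limit, then gives
$$
\widehat\theta_{N_1,N_2}\xrightarrow{\mathbb P} g(\theta^2\mathcal A_\theta,\mathcal A_\theta)=\theta .
$$

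For completeness, I would also give the direct $\varepsilon$-argument, since the limit is random rather than deterministic and this is the one step that needs care. Fix $\varepsilon,\eta>0$ and pick $\kappa>0$ with $\mathbb P(\mathcal A_\theta<2\kappa)<\eta$; this is possible because $\mathcal A_\theta>0$ a.s. On the event $\{\mathcal A_\theta\ge2\kappa,\ |S-\mathcal A_\theta|<\kappa\}$ we have $S\ge\kappa$. Writing $Q=Q_{N_1,N_2}(v_\theta)$ and $S=S_{N_1,N_2}(v_\theta)$, on this event
$$
\Big|\frac QS-\theta^2\Big|\le\frac{|Q-\theta^2\mathcal A_\theta|+\theta^2|S-\mathcal A_\theta|}{\kappa}.
$$
The right-hand side tends to $0$ in probability, so $\mathbb P(|Q/S-\theta^2|>\varepsilon)\le 2\eta+o(1)$. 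Finally, $|\sqrt x-\theta|\le|x-\theta^2|/\theta$ for $x\ge0$, which transfers the convergence of $Q/S$ to $\widehat\theta_{N_1,N_2}$.

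I expect no substantial obstacle. The main point is to handle the random denominator, which can be small but not zero in the limit, using the positivity assumption on $\mathcal A_\theta$ as above.
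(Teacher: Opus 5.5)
Your proposal is correct and follows the paper's proof. Both establish joint convergence in probability of $(Q_{N_1,N_2}(v_\theta),S_{N_1,N_2}(v_\theta))$ to $(\theta^2\mathcal A_\theta,\mathcal A_\theta)$, then apply the continuous mapping theorem to a map $g$ that is continuous wherever the second coordinate is positive, using $\mathcal A_\theta>0$ a.s.\ to place the limit in that continuity set; your additional $\varepsilon$--$\kappa$ argument is a valid, self-contained check of the same step but is not needed.
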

 
 \begin{proof}
By Theorem~\ref{thm:quadratic-variation},
$$
\big(
Q_{N_1,N_2}(v_\theta),
S_{N_1,N_2}(v_\theta)
\big)
\xrightarrow{\mathbb P}
\big(
\theta^2\mathcal A_\theta,
\mathcal A_\theta
\big).
$$
Define $g:[0,\infty)^2\to[0,\infty)$ by
$$
g(x,y)
=
\begin{cases}
\sqrt{x/y},&y>0,\\
0,&y=0.
\end{cases}
$$
Then
$$
\widehat\theta_{N_1,N_2}
=
g\big(
Q_{N_1,N_2}(v_\theta),
S_{N_1,N_2}(v_\theta)
\big).
$$
Since $\mathcal A_\theta>0$ almost surely, the random point
$(\theta^2\mathcal A_\theta,\mathcal A_\theta)$ belongs almost
surely to the continuity set of $g$. Hence the continuous
mapping theorem gives
$$
\widehat\theta_{N_1,N_2}
\xrightarrow{\mathbb P}
g(\theta^2\mathcal A_\theta,\mathcal A_\theta)
=
\theta.
$$ 

The proof is complete.
\end{proof}

\begin{remark}
The nondegeneracy condition in Proposition~\ref{prop:estimator}
is necessary in general. For instance, if
$u_0\equiv v_0\equiv0$, then $F(0)=0$ and uniqueness imply
$u_\theta\equiv0$, so that $\mathcal A_\theta=0$ and $\theta$
is not identifiable.

A simple sufficient condition is the existence of a deterministic
point $z_0\in\mathcal D$ such that
$$
\mathbb P\big(F(v_\theta(z_0))\neq0\big)=1.
$$
Indeed, continuity of $v_\theta$ and $F$ then implies
$\mathcal A_\theta>0$ almost surely.
\end{remark}

 The preceding results yield a consistent estimator of the parameter $\theta$ based on discrete observations of the solution $u_\theta$ over a space--time grid. In this sense, they provide a partial answer to the open problem raised in \cite[Remark~5.2]{AGT2022}. In contrast to the approach in \cite[Section~4]{TZ2025}, which is based on discrete temporal observations at a fixed spatial location, the present estimator is constructed from observations over a two-dimensional space--time grid and thus incorporates both the temporal and spatial dependence structures of the solution.

\subsection{Numerical illustration}\label{subsec:numerical}

We conclude with a numerical illustration of the estimator in
Proposition~\ref{prop:estimator}.  We take
$$
H=0.35,\qquad \theta=1,\qquad F(x)=\tanh(x),
$$
and use the initial data
$$
u_0(x)=e^{-x^2},\qquad v_0(x)=0.
$$
Thus $F(0)=0$, while the nonzero initial displacement avoids the
trivial solution arising from zero initial data.  We set
$L_1=L_2=0.5$ and choose
$\tau_*=\lambda_*=t_*/\sqrt2$ with $t_*\approx0.4005$.

To simulate the equation, we discretize its mild formulation on a
fine physical grid $t_n=n\Delta t$, $x_j=j\Delta x$ with
$\Delta t=\Delta x$. With $L:=0.5$, we take the reference
resolution $N_{\mathrm{ref}}=128$, so that
$$
\Delta t=\Delta x
=
\frac{L}{\sqrt{2}N_{\mathrm{ref}}}
\approx 2.7621\times10^{-3}.
$$
The computation is performed on the physical spatial interval
$[-1.475,1.475]$ up to time $T\approx1.1076$.
The resulting approximation is
\begin{equation}\label{eq:numerical-mild}
 u_{n,j}^{\Delta}
 =I_0(t_n,x_j)
 +\frac{\theta}{2}
 \sum_{r=0}^{n-1}
 \sum_{|x_j-x_k|<t_n-t_r}
 F(u_{r,k}^{\Delta})\,\Delta W_{r,k}.
\end{equation}
The spatially rough noise increments are generated by circulant
embedding.  More precisely,
$$
\operatorname{Cov}(\Delta W_{r,k},\Delta W_{s,\ell})
=
\mathbf 1_{\{r=s\}}\,\Delta t\,(\Delta x)^{2H}
\gamma_H(k-\ell),
$$
where
$$
\gamma_H(q)
=\frac12\Big(
 |q+1|^{2H}+|q-1|^{2H}-2|q|^{2H}
 \Big).
$$
For each simulated path, the values of $u^\Delta$ are sampled on the
null-coordinate grid, and $Q_{N_1,N_2}$, $S_{N_1,N_2}$, and
$\widehat\theta_{N_1,N_2}$ are then computed exactly as in
\eqref{eq:Q-definition}, \eqref{eq:S-definition}, and
Proposition~\ref{prop:estimator}.
  
  We use $250$ Monte Carlo replications with random seed $20260927$
and consider both balanced meshes $N_2=N_1$ and anisotropic meshes
$N_2=2N_1$. 
The results are reported in Table~\ref{tab:theta-simulation}.
 For both mesh designs, the bias, standard deviation, and root mean
squared error decrease with mesh refinement, indicating improved
finite-sample performance.

\begin{table}[!htbp]
\centering
\caption{Monte Carlo performance of $\widehat\theta_{N_1,N_2}$ for
$H=0.35$ and $\theta=1$, based on $250$ replications.}
\label{tab:theta-simulation}
\begin{tabular}{ccrrrr}
\toprule
$N_1$ & $N_2$ & Mean & Bias & SD & RMSE\\
\midrule
8  & 8  & 0.9568 & -0.0432 & 0.1013 & 0.1100\\
16 & 16 & 0.9788 & -0.0212 & 0.0560 & 0.0598\\
32 & 32 & 0.9928 & -0.0072 & 0.0294 & 0.0302\\
\midrule
8  & 16 & 0.9652 & -0.0348 & 0.0808 & 0.0878\\
16 & 32 & 0.9866 & -0.0134 & 0.0400 & 0.0421\\
32 & 64 & 0.9974 & -0.0026 & 0.0203 & 0.0205\\
\bottomrule
\end{tabular}
\end{table}

Figure~\ref{fig:theta-simulation} complements Table~\ref{tab:theta-simulation}.
The estimator becomes increasingly concentrated around the true value
$\theta=1$ as the mesh is refined. The anisotropic experiment further
illustrates its stability when the two null-coordinate resolutions differ,
in agreement with the aspect-ratio-free result of
Theorem~\ref{thm:quadratic-variation}.

\begin{figure}[!htbp]
\centering
\includegraphics[width=0.96\textwidth]{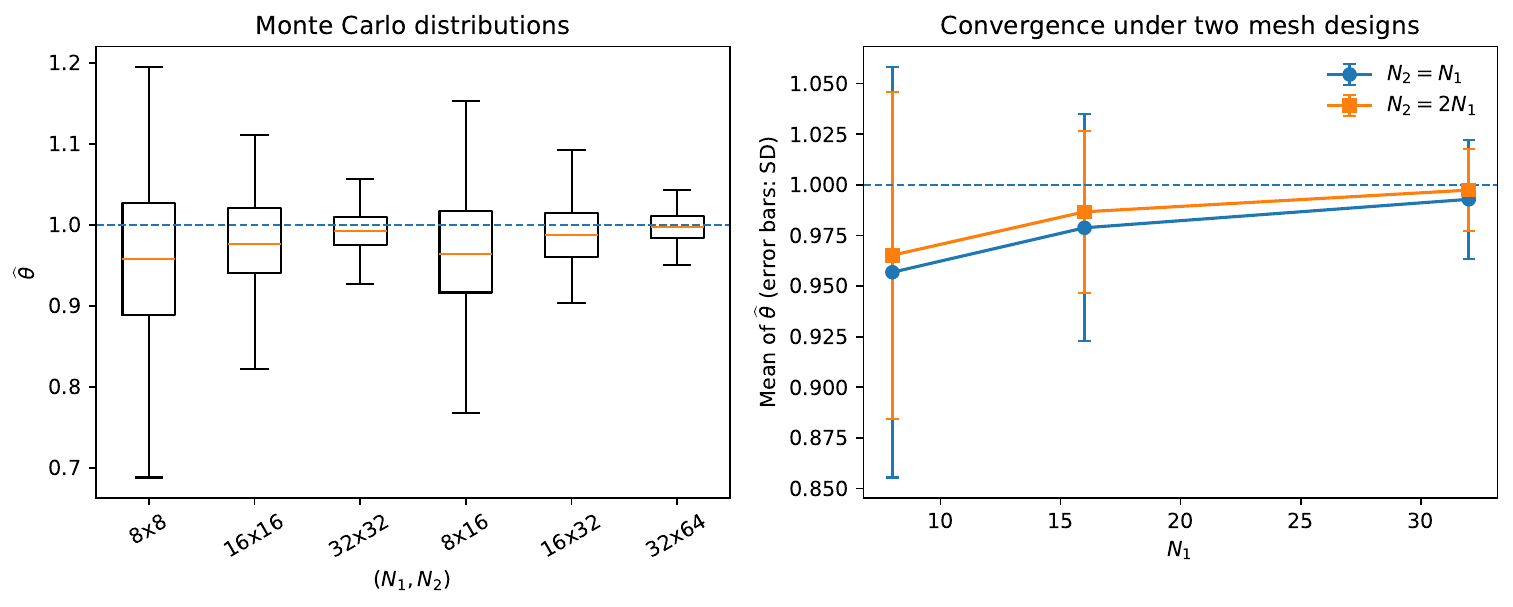}
 \caption{Monte Carlo behavior of the diffusion-multiplier estimator.
Left: empirical distributions of $\widehat\theta_{N_1,N_2}$ for
balanced and anisotropic grids. Right: empirical means with
one-standard-deviation error bars for $N_2=N_1$ and $N_2=2N_1$.
The horizontal dashed line represents the true value $\theta=1$.} 
\label{fig:theta-simulation}
\end{figure}

\end{document}